\newtheorem{theorem}{Theorem}[section]
\newtheorem{proposition}[theorem]{Proposition}
\newenvironment{proof}{\begin{trivlist}
\item[\hspace{\labelsep}{\bf\noindent Proof. }]}
{$\hfill\Box$\end{trivlist}}
\title{\huge\bf
A continuous-time Ehrenfest model with catastrophes and its jump-diffusion approximation
\date{Author's version.  Published in:  {\em Journal of Statistical Physics}\ 161 (2015), pp.\ 326-345,   
doi:   10.1007/s10955-015-1336-4 \ -- \ 
URL: https://link.springer.com/content/pdf/10.1007/s10955-015-1336-4.pdf}
}
\author{
\large \bf Selvamuthu Dharmaraja\footnote{
Department of Mathematics, Indian Institute of Technology Delhi, New Delhi 110016, India. 
E-mail: dharmar@maths.iitd.ac.in}
\qquad
Antonio Di Crescenzo\footnote{
Dipartimento di Matematica, Universit\`a degli Studi di Salerno, 
Via Giovanni Paolo II n.132, 84084 Fisciano (SA), Italy, 
E-mail: adicrescenzo@unisa.it}
\\
\large \bf  Virginia Giorno\footnote{
Dipartimento di Studi e Ricerche Aziendali (Management \& Information Technology),
Universit\`a di Salerno, Via Giovanni Paolo II n.\ 132, 84084 Fisciano (SA), Italy.
E-mail: giorno@unisa.it}
\qquad
Amelia G. Nobile\footnote{
Dipartimento di Studi e Ricerche Aziendali (Management \& Information Technology),
Universit\`a di Salerno, Via Giovanni Paolo II n.\ 132, 84084 Fisciano (SA), Italy.
E-mail: nobile@unisa.it}
}
\begin{document}

\maketitle 
\begin{abstract}
\noindent 
We consider a continuous-time Ehrenfest model defined over the integers from $-N$ to $N$, 
and subject to catastrophes occurring at constant rate. The effect of each catastrophe 
instantaneously resets the process to state 0. We investigate both the transient and steady-state 
probabilities of the above model. Further, the first passage time through state 0 is discussed. 
We perform a jump-diffusion approximation of the above model, which leads to the 
Ornstein-Uhlenbeck process with catastrophes. The underlying jump-diffusion process 
is finally studied, with special attention to the symmetric case arising when  
the Ehrenfest model has equal upward and downward transition rates. 

\medskip\noindent
\emph{Keywords:} 
Transient probabilities, Steady-state probabilities,  First passage time, Ornstein-Uhlenbeck process. \\
\emph{Mathematics Subject Classification:} 
60J80; 
60J27; 
60J60. 
\end{abstract}

%
\section{Introduction}\label{sec:intro}
The Ehrenfest model describes a simple diffusion process as a Markov chain, where molecules of a gas  
are in a container divided into two equal parts by a permeable membrane and diffuse at random according 
to the number of molecules in a single part. This model, that was originally proposed to explain the second 
law of thermodynamics on the macroscopic scale, deserves large interest in physics 
and in applied sciences, stimulating indeed various investigations due to its flexibility and wide applicability. 
Among the recent contributions related to the continuous-time version of this model we recall 
Balaji et al.\ \cite{BaMaTo2010}, where the status of the system is studied in detail after a certain number 
of steps, by considering three phases depending on the number of molecules. We also mention 
Flegg et al.\ \cite{FlPoGr2008}, where a theory of the degradation and thermal fragmentation kinetics 
of polymerlike systems is analysed by means of a first passage time problem through state zero. 
Hauert et al.\ \cite{HaNaSc2004} propose a generalization of the original model by including the possibility 
that a single state changes with probability $p$. 
\par
Modified Ehrenfest models are worthy of interest also in mathematical finance, for instance to describe 
returns in stock index prices under the effect of large jumps (see Takahashi \cite{Ta2004}). 
Continuous-time stochastic systems are often investigated under the presence of jumps (describing 
the effect of total catastrophes) in order to capture the presence of more realistic conditions. 
\par
Along this line our purpose is to investigate an Ehrenfest model subject to catastrophes, as well 
as its jump-diffusion approximation. The reference model is a continuous-time skip-free Markov 
chain superimposed by randomly occurring jumps. The literature in this field is quite large. 
By restricting our attention mainly to birth-death processes subject to catastrophes, as suitable examples 
we refer to the papers by Brockwell \cite{Br85}, \cite{Br86}, Cairns and Pollett \cite{CaPo2004} 
and Pollett et al.\ \cite{PoZhCa2007} (where necessary and sufficient conditions are given such 
a birth-death-catastrophe process reaches the extinction), Chao and Zheng \cite{ChZh2003}, 
Kyriakidis \cite{Ky2004},  Renshaw and Chen \cite{ReCh1997} 
(for the study of the transient and equilibrium behaviors of the immigration-birth-death process with
catastrophes),  Kyriakidis \cite{Ky1994}, \cite{Ky2001}, \cite{Ky2002}, 
Zeifman et al.\ \cite{ZeSaPa2013}, Van Doorn and Zeifman \cite{VaDZe2005}, 
Giorno and Nobile \cite{GiNo2013}, Giorno et al.\ \cite{GiNoSp2014} and 
Di Crescenzo et al.\ \cite{DGNR2008} (for the analysis of various types of birth-death 
processes under the influence of catastrophes), Pakes \cite{Pa97}, Chen et al.\ \cite{ChZhLiWe2004} 
and Economou and Fakinos \cite{EcFa2003}, \cite{EcFa2008}  (for the determination of various 
distributions and other quantities of interest for continuous-time Markov chains subject to catastrophes), 
Chen and Renshaw \cite{ChRe1997} and \cite{ChRe2004}, and Krishna Kumar et al.\ \cite{KrKrPS2007} and 
\cite{KrViSo2008} (for the analysis of Markovian queueing systems in the presence of mass annihilations).
\par
It is worth noting that there has been recently some interest in the statistical physics literature 
on stochastic processes subject to catastrophes, but under the different name of `processes with 
stochastic reset'. In this field attention has been given to simple diffusions where a particle 
stochastically resets to its initial position at a constant rate. We recall the contributions by 
Evans and Majumdar on the Brownian motion with reset, where the mean time to find a 
stationary target by a diffusive searcher is investigated \cite{EvMa2011a}, and where a 
problem of optimal resetting is addressed \cite{EvMa2011b}. More recent results in the area 
of stochastic processes with resetting are related to general diffusions (see Pal \cite{Pal2015}), 
and one-dimensional L\'evy flights, also known as intermittent random walks (cf.\ Kusmierz 
et al.\ \cite{KMSS2014}). 
\par
Many discrete stochastic systems are often investigated under suitable limiting conditions leading to 
diffusion processes. The case of the discrete-time Ehrenfest model was first treated in Section 4 of 
Kac \cite{Ka1947}, where the Ornstein-Uhlenbeck process was employed. With reference to models 
subject to catastrophes we recall the jump-diffusion approximations to the M/M/1 queue and to a 
double-ended queue both based on the Wiener process (see Di Crescenzo 
et al.\  \cite{DGKN2012}, \cite{DGNR2003}). Other jump-diffusion models involving the 
Ornstein-Uhlenbeck process and other processes have been 
investigated in di Cesare et al.\ \cite{dicGN2009}, and Giorno et al.\ \cite{GiNo2012}. 
In this paper we perform a suitable scaling limit on the continuous-time Ehrenfest model 
that leads to a suitable jump-diffusion process of the Ornstein-Uhlenbeck type. 
\par
The appropriately normalized Ehrenfest model converges weakly to a limiting diffusion process that 
is the Ornstein-Uhlenbeck process, even in the multidimensional case (see Iglehart \cite{Ig1968}). 
In this paper we purpose to show that the Ehrenfest model subject to catastrophes in the limit 
converges to a jump-diffusion process, which is again of the Ornstein-Uhlenbeck type. 
The necessity of resorting to a continuous approximation is due to the fact that the expressions 
of the transition probabilities and of the stationary distribution of the Ehrenfest model with 
catastrophes are computationally intractable for large values of the involved parameters. 
\par
This is the plan of the paper. In Section \ref{sec2}, we introduce the continuous-time stochastic 
process describing  the Ehrenfest model defined on the integers from $-N$ to $N$. The model 
includes the occurence of catastrophes arriving according to the exponential distribution 
with constant rate, whose effect is to set the 
state of the system equal to 0. We relate the relevant functions of the process to those 
of the birth-death process obtained by removing the possibility of catastrophes. 
This allows to determine the transient probabilities,  the mean and the second order moment. 
In Section \ref{sec3}, we propose a jump-diffusion approximation of the Ehrenfest model with 
catastrophes. We obtain the Fokker-Planck equation for the approximating jump-diffusion 
process. This is defined on the set of real numbers, and possesses linear drift and constant 
infinitesimal variance. In particular, we are able to obtain the mean and the second order moment 
of the process in the transient phase. The steady-state density is also evaluated, expressed 
in closed form in terms of the  parabolic cylinder function. The first passage time problem through 
state 0 is also faced. We obtain the explicit expression of the first passage time density 
and of its mean. 
In conclusion, special attention is devoted to the  jump-diffusion approximation in the 
special case in which the Ehrenfest model has equal upward and downward transition 
rates. In this case both the discrete model and the jump-diffusion approximating 
process exhibit certain suitable symmetries. 
\par
We point out that various computational results shown in this paper have been obtained 
by use of Mathematica \textregistered. 
Note that throughout the paper we denote by $E_j[Y(t)]$ and  $V_j[Y(t)]$ the mean and the variance, 
respectively, of any stochastic process $Y(t)$ conditional on $Y(0)=j$. Moreover, we denote the 
rising factorial as $(\alpha)_{n}=\alpha(\alpha+1)\cdots(\alpha+n-1)$, for $n\geq 1$, with $(\alpha)_{0}=1$. 
\section{A stochastic model with catastrophes}\label{sec2}
We consider a system subject to catastrophes, described by a stationary Markov chain
$\{M(t), t \geq 0 \}$  defined on the state-space $S=\{-N, -N+1, \ldots, -1,0,1, \ldots, N\}$, 
with $N$ a positive integer. We suppose that the catastrophes occur according to a 
Poisson process with intensity $\xi$. Denoting by  
$$
 r(k,n)=\lim_{h\to 0^+} \frac{1}{h}P\{M(t+h)=n\,|\,M(t)=k\}, \qquad k,n\in S
$$
the transition rate function of $M(t)$, we assume that the allowed transitions occur according 
to the following scheme:
\begin{eqnarray} 
 && r(-1, 0) = \lambda(N+1)+\xi, \qquad 
 r(n, n+1) = \lambda(N-n),  \quad  \forall \;\; n\in S\setminus\{-1\}; 
 \label{eq:brate} \\
 && r(1, 0) = \mu(N+1)+\xi, \qquad 
 r(n,n-1) = \mu (N+n), \quad \forall \;\; n\in S\setminus\{1\}; 
 \label{eq:drate} \\
 && r(n,0) =\xi, \quad n \in S\setminus\{0\},
 \label{eq:crate} 
\end{eqnarray}
with $\lambda,\mu,\xi>0$. Hence, $\{M(t), t \geq 0 \}$ is a time-homogeneous continuous-time 
Markov chain with transition rates \eqref{eq:brate}, \eqref{eq:drate} and \eqref{eq:crate}, 
and is defined on the state-space $S$, where $S$ is an irreducible class. Eq.\ \eqref{eq:crate} 
defines the catastrophe rate, the effect of each catastrophe being the instantaneous transition 
to the state $0$ (cf.\ Figure \ref{Fig1}). 
\par
It is worth pointing out that in the absence of catastrophes and in the symmetric case $\lambda=\mu$, 
the imbedded random walk that describes the state changes of $M(t)$ identifies with the 
stochastic process studied in Section 4 of Kac \cite{Ka1947}.  Moreover, $M(t)$ can also 
be viewed as a suitable modification of the Prendiville process (see Zheng \cite{Zh1998} 
and references therein). 
\par
Due to (\ref{eq:brate}), (\ref{eq:drate}) and (\ref{eq:crate}), for all $j, n \in S$ and $t \geq0$ the 
transition probabilities 
$$ 
 p_{j,n}(t) = P\{M(t) = n \mid M(0) = j \} \qquad (j, n \in S)
$$
satisfy the following differential-difference equations:
\begin{equation}
\left\{
\begin{array}{l}
\frac{d }{d t} p_{j,0}(t) = - [N(\lambda +\mu) + \xi] p_{j,0}(t)
+ (N+1) \lambda p_{j, -1}(t) + (N+1) \mu p_{j,1} (t) + \xi, 
\\[0.2cm]
\frac{d }{d t} p_{j,n}(t) = - [(N-n) \lambda + (N+n) \mu + \xi]
p_{j, n}(t) + (N-n+1) \lambda p_{j, n-1}(t)
\\[0.1cm]
\hspace{2.cm} + (N+n+1) \mu p_{j,n+1} (t), 
\hspace*{3.cm}  n= \pm 1, \pm 2, \ldots,\pm (N-1),  
\\[0.2cm]
\frac{d }{d t}p_{j,-N}(t) = - ( 2N \lambda + \xi) p_{j, -N}(t) +  \mu p_{j,-N+1} (t),  
\\[0.2cm]
\frac{d }{d t}p_{j,N}(t) = - ( 2 N \mu + \xi) p_{j, N}(t) + \lambda p_{j, N-1}(t). 
\end{array}
\right.
\label{eq:system}
\end{equation}
The initial condition for system (\ref{eq:system}) is expressed in terms of the Kronecker's delta: 
\begin{equation}
p_{j,n} (0) = \delta_{j,n}= \left\{ 
\begin{array}{ll} 
1, & n = j \\
0, & \mbox{otherwise}.
\end{array}  
\right.
\label{eq:initialp}
\end{equation}
%
%
\begin{figure}[t]
\centering
\begin{picture}(370,150) 
\put(-30,72){\circle{20}} 
\put(-41,60){\makebox(20,15)[t]{\footnotesize $ -N$}} 
\put(70,142){\vector(1,0){5}} 
\put(72,82){\oval(206,120)[t]}
\put(55,142){\makebox(30,14)[t]{\scriptsize $\xi$}} 
\put(-25,64){\makebox(40,15)[t]{$\cdots\cdots$}} 
\put(-25,54){\makebox(40,15)[t]{$\cdots\cdots$}} 
\put(20,72){\circle{20}} 
\put(10,60){\makebox(20,15)[t]{\footnotesize $ -3$}} 
\put(45,72){\oval(30,15)[]} 
\put(28,76){\makebox(33,14)[t]{\tiny $\lambda(N+3)$}} 
\put(42,79.5){\vector(1,0){5}} 
\put(29,44){\makebox(35,15)[t]{\tiny $\mu(N-2)$}} 
\put(47,64.5){\vector(-1,0){5}} 
\put(96,82){\oval(152,85)[t]}
\put(98,124.5){\vector(1,0){5}}  
\put(86,123){\makebox(30,14)[t]{\scriptsize $\xi$}} 
\put(70,72){\circle{20}} 
\put(60,60){\makebox(20,15)[t]{\footnotesize $ -2$}} 
\put(95,72){\oval(30,15)[]} 
\put(78,76){\makebox(33,14)[t]{\tiny $\lambda(N+2)$}} 
\put(92,79.5){\vector(1,0){5}} 
\put(79,44){\makebox(35,15)[t]{\tiny $\mu(N-1)$}} 
\put(97,64.5){\vector(-1,0){5}} 
\put(119,82){\oval(98,55)[t]} 
\put(120,109.5){\vector(1,0){5}}
\put(106,108){\makebox(30,14)[t]{\scriptsize $\xi$}}   
\put(120,72){\circle{20}} 
\put(110,60){\makebox(20,15)[t]{\footnotesize $ -1$}} 
\put(145,72){\oval(30,15)[]} 
\put(130,76){\makebox(29,13)[t]{\tiny $\lambda(N+1)$}} 
\put(142,79.5){\vector(1,0){5}} 
\put(129,44){\makebox(30,15)[t]{\tiny $\mu N$}} 
\put(147,64.5){\vector(-1,0){5}} 
\put(142,82){\oval(45,25)[t]} 
\put(140,94.5){\vector(1,0){5}}  
\put(126,92){\makebox(30,14)[t]{\scriptsize $\xi$}}   
\put(170,72){\circle{20}} 
\put(160,60){\makebox(20,15)[t]{\footnotesize $ 0$}} 
\put(195,72){\oval(30,15)[]} 
\put(178,76){\makebox(31,13)[t]{\tiny $\lambda\,N$}} 
\put(192,79.5){\vector(1,0){5}} 
\put(179,44){\makebox(35,15)[t]{\tiny $\mu(N+1)$}} 
\put(197,64.5){\vector(-1,0){5}} 
\put(199,62){\oval(45,25)[b]} 
\put(200,49.5){\vector(-1,0){5}} 
\put(185,32){\makebox(30,14)[t]{\scriptsize $\xi$}} 
\put(220,72){\circle{20}} 
\put(210,60){\makebox(20,15)[t]{\footnotesize $ 1$}} 
\put(245,72){\oval(30,15)[]} 
\put(227,76){\makebox(31,14)[t]{\tiny $\lambda(N-1)$}} 
\put(242,79.5){\vector(1,0){5}} 
\put(229,44){\makebox(35,15)[t]{\tiny $\mu(N+2)$}} 
\put(247,64.5){\vector(-1,0){5}} 
\put(222,62){\oval(98,55)[b]} 
\put(225,34.5){\vector(-1,0){5}} 
\put(208,17){\makebox(30,14)[t]{\scriptsize $\xi$}} 
\put(270,72){\circle{20}} 
\put(260,60){\makebox(20,15)[t]{\footnotesize $ 2$}} 
\put(295,72){\oval(30,15)[]} 
\put(277,76){\makebox(31,14)[t]{\tiny $\lambda(N-2)$}} 
\put(292,79.5){\vector(1,0){5}} 
\put(279,44){\makebox(35,15)[t]{\tiny $\mu(N+3)$}} 
\put(297,64.5){\vector(-1,0){5}} 
\put(245,62){\oval(150,85)[b]} 
\put(250,19.5){\vector(-1,0){5}} 
\put(234,1){\makebox(30,14)[t]{\scriptsize $\xi$}} 
\put(320,72){\circle{20}} 
\put(310,60){\makebox(20,15)[t]{\footnotesize $ 3$}} 
\put(325,64){\makebox(40,15)[t]{$\cdots\cdots$}} 
\put(325,54){\makebox(40,15)[t]{$\cdots\cdots$}} 
\put(370,72){\circle{20}} 
\put(360,60){\makebox(20,15)[t]{\footnotesize $ N$}} 
\put(270,62){\oval(205,120)[b]} 
\put(270,2){\vector(-1,0){5}} 
\put(255,-15){\makebox(30,14)[t]{\footnotesize $\xi$}} 
\end{picture} 
\caption{The state diagram of the process $M(t)$.}
\label{Fig1}
\end{figure}
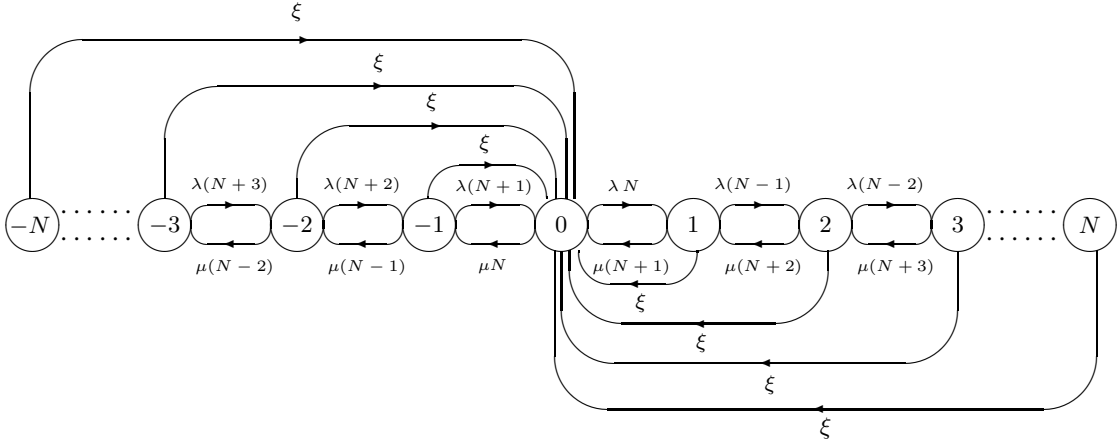
%
\par
It is customary to study a Markov process subject to catastrophes by referring to the basic process 
(i.e.\ in absence of catastrophes). To this aim, we denote by $\{\widetilde{M}(t), t \geq 0 \}$  
the stochastic process corresponding to $\{M(t), t \geq 0 \}$ when $\xi\to 0^+$. Hence, 
$\widetilde{M}(t)$ constitutes a continuous-time skip-free Markov chain, i.e.\ a time-homogeneous 
birth-death process without catastrophes, defined on the state 
space $S$, and obtained from $M(t)$ by removing the possibility of catastrophes. For all $t\geq 0$ 
we introduce the transition probabilities   
$$ 
 \widetilde{p}_{j,n} (t) = P\{\widetilde{M}(t) = n \mid \widetilde{M}(0)= j \}, 
 \qquad j, n \in S, 
$$
with the initial condition 
$$ 
 \widetilde{p}_{j,n} (0) = \delta_{j,n}.
$$
\par
Conditioning on the age of the catastrophe process it is not hard to see that the transition 
probabilities $p_{j,n}(t)$ and $\widetilde{p}_{j,n}(t)$ are related by the following equation 
(cf., for instance,  Kyriakidis \cite{Ky1994}, Pakes \cite{Pa97} or  Renshaw and Chen \cite{ReCh1997}): 
\begin{eqnarray} 
p_{j,n}(t) =e^{-\xi t} \widetilde{p}_{j,n}(t) + \xi \int_0^t e^{-\xi \tau}
\widetilde{p}_{0,n}(\tau) d \tau,
\qquad  j, n \in S,\; \; t\geq 0.
\label{NandNp}
\end{eqnarray}
Hence, due to \eqref{NandNp} the $k$-th conditional moment of $M(t)$ can be related to that 
of $\widetilde{M}(t)$ for $k=1,2,\ldots$, since  
\begin{eqnarray} 
E_{j}[M^k(t)] =e^{-\xi t} E_{j}[\widetilde M^k(t)] + \xi \int_0^t e^{-\xi \tau}
E_{0}[\widetilde M^k(\tau)] d \tau,
\qquad  j \in S, \;\; t\geq 0. 
\label{NandNE}
\end{eqnarray}
Moreover, let 
$$
 {\cal T}_j=\inf\{t\geq 0: M(t)=0\}, \qquad M(0)=j, \quad  j\in S\setminus \{0\}
$$ 
be the first-passage time of $M(t)$ through state 0, and let $g_{j,0}(t)$ denote the corresponding density. 
We have (see, for instance, \cite{Pa97} or \cite{DGNR2008}):
\begin{eqnarray} 
g_{j,0}(t) =e^{-\xi t} \widetilde{g}_{j,0}(t) + \xi \,e^{-\xi t}\left[1-\int_0^t \widetilde{g}_{j,0}(\tau)\,d\tau\right] ,
\qquad  j \in S\setminus\{0\},\; \; t\geq 0,
\label{gandgj}
\end{eqnarray}
where $\widetilde g_{j,0}(t)$ is the first-passage-time density of $\widetilde M(t)$ through state 0  
when $\widetilde M(0)=j$. 
\subsection{The discrete process in absence of catastrophes}
Aiming to study $M(t)$ by means of relations (\ref{NandNp}), (\ref{NandNE}) and (\ref{gandgj}) we 
now recall some useful results on process $\{\widetilde{M}(t), t \geq 0 \}$. We first notice that   
$\widetilde M(t)$ can be expressed as (similarly as in Zheng \cite{Zh1998}): 
\begin{equation}
 \widetilde M(t)\stackrel{d}{=}\widetilde M_1(t)+\widetilde M_2(t)-N, \qquad t\geq 0,
 \label{eq:Mt}
\end{equation}
where, for any fixed $t\geq 0$, $\widetilde M_1(t)$ and $\widetilde M_2(t)$ are independent binomial 
random variables such that, under condition $\widetilde M(0)=j$, 
\begin{equation}
 \widetilde M_1(t)\sim {\cal B}in\left(N+j, b_1(t)\right), 
 \qquad 
 \widetilde M_2(t)\sim {\cal B}in\left(N-j, b_2(t)\right), 
 \label{eq:M12t}
\end{equation}
with 
\begin{equation}
 b_1(t)=\frac{\lambda+\mu e^{-(\lambda+\mu)t}}{\lambda+\mu},
 \qquad 
 b_2(t)=\frac{\lambda}{\lambda+\mu}\left(1-e^{-(\lambda+\mu)t}\right),
 \qquad t\geq 0,
 \label{eq:p12t}
\end{equation}
and where `$\stackrel{d}{=}$' means equality in distribution. This allows to obtain the transition 
probabilities for $j,n\in S$ and $t\geq 0$:  
\begin{eqnarray}
\widetilde{p}_{j,n} (t) \!\! & = & \!\!  \sum_{i=\max\{0,j+n\}}^{\min\{N+n,N+j\}}
\binom{N+j}{i}\binom{N-j}{N+n-i} [b_1(t)]^i[1-b_1(t)]^{N+j-i} [b_2(t)]^{N+n-i}[1-b_2(t)]^{i-j-n}
\nonumber \\
& = &\!\! \frac{\mu^{N+j}}{(\lambda +\mu)^{2N}} 
\left[1 - e^{-(\lambda + \mu)t} \right]^{N+j} \left[ \mu+ \lambda e^{-(\lambda + \mu)t} \right]^{N - j} 
\nonumber \\
& \times & \!\!  \sum_{i=\max\{0,j+n\}}^{\min\{N+n,N+j\}} \binom{N+j}{i}\binom{N-j}{N+n-i} 
\left[ \frac{\lambda + \mu e^{-(\lambda +\mu)t}}{\mu \left( 1-  e^{-(\lambda + \mu)t}\right)} \right]^i
\left[ \frac{\lambda \left( 1- e^{-(\lambda + \mu)t}\right)}{\mu +\lambda e^{-(\lambda + \mu)t}} \right]^{N+n-i}. 
\nonumber \\
&& \label{prob_2N}
\end{eqnarray} 
Note that from (\ref{prob_2N}) the following symmetry property holds, with obvious notation:
\begin{equation} 
 \widetilde{p}_{j,n}(t;\lambda,\mu) = \widetilde{p}_{-j,-n}(t;\mu,\lambda),
 \qquad j, n \in S, \;\; t \geq 0. 
 \label{eq:symmwpjn}
\end{equation}
We now provide  the conditional mean and variance of $\{\widetilde{M}(t), t \geq 0 \}$, 
obtained from Eqs.\ (\ref{eq:Mt}), (\ref{eq:M12t}) and (\ref{eq:p12t}):   
\begin{eqnarray} 
 E_j[\widetilde{M}(t)]\!\! &=& \!\! j e^{-(\lambda+\mu)t}+\frac{(\lambda-\mu)N}{\lambda+\mu}
 \left(1-e^{-(\lambda+\mu)t}\right),
 \label{eq:medMtilde} \\
  V_j[\widetilde{M}(t)]\!\! &=& \!\! \frac{1-e^{-(\lambda+\mu)t}}{(\lambda+\mu)^2}
  \Big\{(N+j)\mu  \left(\lambda+\mu e^{-(\lambda+\mu)t}\right)
  +(N-j)\lambda  \left(\mu+ \lambda e^{-(\lambda+\mu)t}\right)\Big\}.
  \label{eq:varMtilde} 
\end{eqnarray}
\par
Let us now denote by $\widetilde M$ the random variable that describes the stationary state of the 
process. By letting $t \rightarrow \infty$ in Eq.\ (\ref{prob_2N}), we have
\begin{equation}
 \widetilde{q}_n :=P(\widetilde M=n)= \lim_{t\to +\infty} \widetilde{p}_{j,n}(t)
 = \binom{2N}{N- n} (1 + \rho)^{-2N} \rho^{n+N}, 
 \qquad n\in S, 
 \label{eq:wqn}
\end{equation}
where we have set $\rho =\lambda/\mu$. Hence, from (\ref{eq:wqn}) it follows
$$
 E[\widetilde M]=\frac{N(\rho-1)}{1+\rho}, 
 \qquad 
 Var[\widetilde M]=\frac{2N\rho}{(1+\rho)^2}. 
$$
\par
Finally, we recall that when $\lambda = \mu$  relation (\ref{eq:symmwpjn}) allows to obtain a 
closed-form expression for the first-passage-time density through state $0$. Indeed, if 
$\lambda = \mu$ we have (cf.\ Theorem 4.3 of \cite{DiCr98})
\begin{equation}
 \widetilde g_{j,0}(t)=\mu(N+1)\,{\rm sgn}(j)[\widetilde{p}_{j,1}(t)-\widetilde{p}_{j,-1}(t)],
 \qquad j\in S\setminus\{0\}, \;\; t\geq 0,
 \label{eq:wgkt}
\end{equation}
where ${\rm sgn}(j)=1$ if $j>0$ and ${\rm sgn}(j)=-1$ if $j<0$. 
\begin{figure}[t]
\centering
\subfigure[$\lambda=0.6, \mu=0.6, \xi=0.5$]{\includegraphics[width=0.45\textwidth]{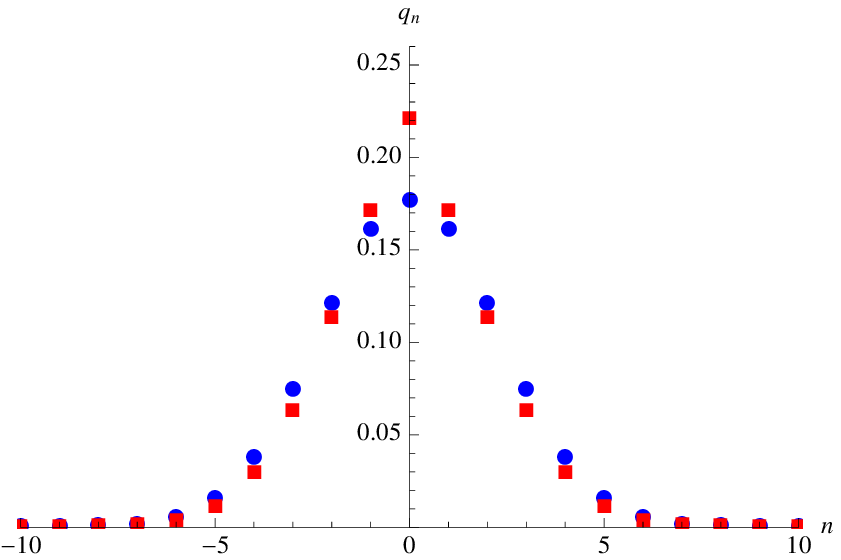}}\qquad
\subfigure[$\lambda=0.6, \mu=0.6, \xi=1.0$]{\includegraphics[width=0.45\textwidth]{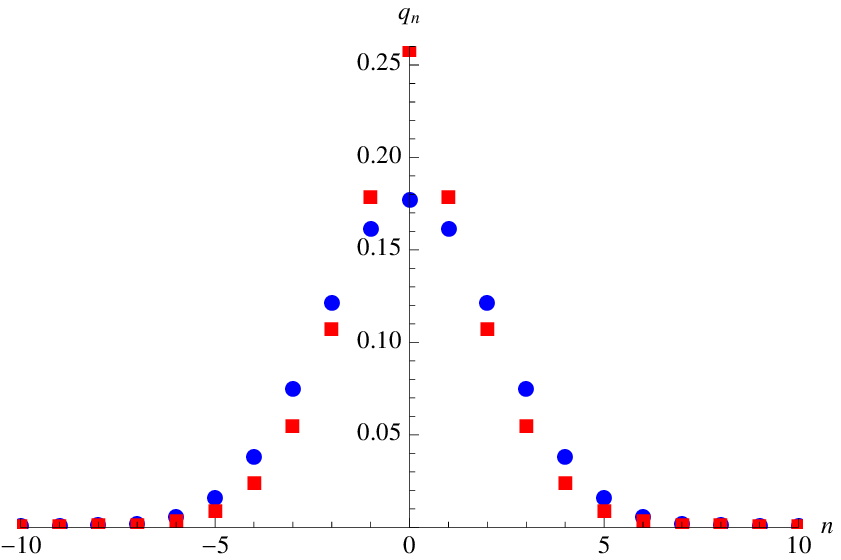}}\\
\subfigure[$\lambda=0.2, \mu=0.6, \xi=0.5$]{\includegraphics[width=0.45\textwidth]{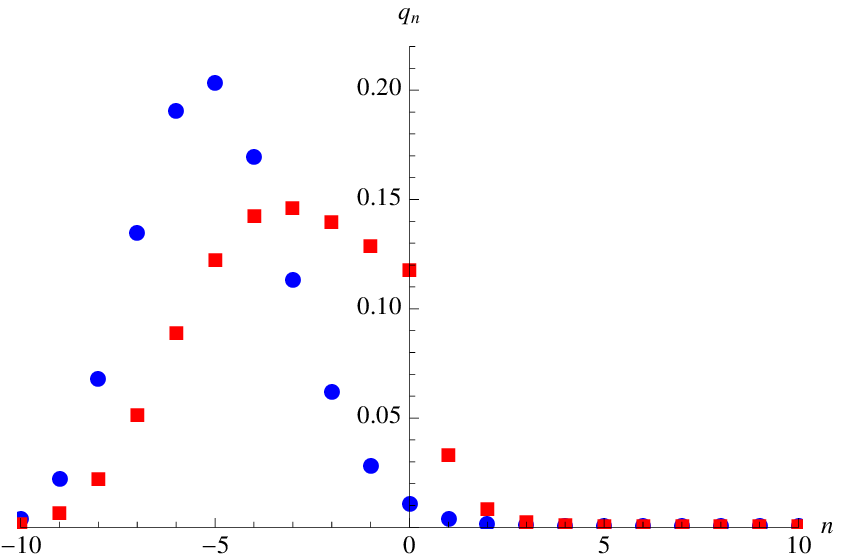}}\qquad
\subfigure[$\lambda=0.6, \mu=0.2, \xi=0.5$]{\includegraphics[width=0.45\textwidth]{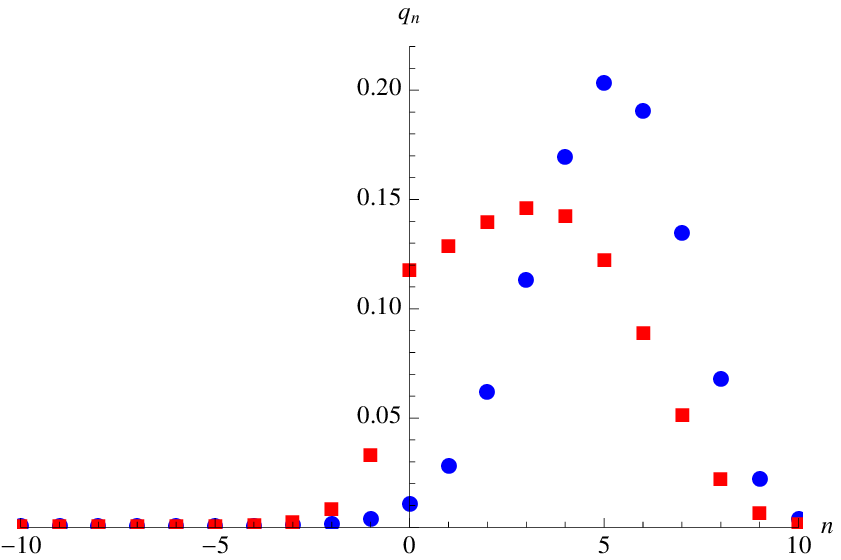}}\\
\caption{\footnotesize The steady-state probabilities $q_n$ (square) and $\widetilde q_n$ (circle) for $N=10$.}
\label{figure2}
\end{figure}
%
\subsection{The discrete process with catastrophes}
In this section we study the process $M(t)$. We first determine the stationary distribution, namely 
$$
 q_n:=\lim_{t\to +\infty}p_{j,n}(t), \qquad n\in S.
$$
\begin{proposition}\label{prop:qn}
For all $n\in S$ we have 
\begin{eqnarray}
 q_n \!\! &=& \!\! \xi \frac{\lambda^{N+n}\mu^{N-n}}{(\lambda+\mu)^{2N+1}} 
 \sum_{i=\max\{0,n\}}^{\min\{N,N+n\}} \binom{N}{i}\binom{N}{N+n-i} \,B\left(2N+n-2i+1,\frac{\xi}{\lambda +\mu}\right)
 \nonumber  \\
 &\times & \!\! F_1\left( \frac{\xi}{\lambda +\mu},-i,n-i,\frac{\xi}{\lambda +\mu}+2N+n-2i+1; 
 -\frac{\mu}{\lambda},-\frac{\lambda}{\mu}\right),
 \label{eq:qn}
\end{eqnarray}
where $B(x,y)={\Gamma(x)\Gamma(y)}/{\Gamma(x+y)}$ is the beta function, and 
$$
 F_1\left( \alpha,\beta,\gamma,\delta; x,y\right)
 =\sum_{m=0}^{+\infty}\sum_{n=0}^{+\infty}
 \frac{(\alpha)_{m+n}(\beta)_{m}(\gamma)_{n}}{(\delta)_{m+n}}\,\frac{x^m}{m!}\,
 \frac{y^n}{n!}\qquad (|x|<1,\;|y|<1)
$$
is the hypergeometric function of two variables (see, for instance,  \cite{GrRy2007} p.\ 1018, n.\ 9.180.1). 
\end{proposition}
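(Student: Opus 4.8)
The plan is to obtain $q_n$ as the $t\to\infty$ limit of the basic relation \eqref{NandNp}. Since $\{M(t)\}$ is irreducible on the finite state space $S$ it is positive recurrent, so $q_n=\lim_{t\to\infty}p_{j,n}(t)$ exists and is independent of $j$. In the right-hand side of \eqref{NandNp} the term $e^{-\xi t}\widetilde p_{j,n}(t)$ tends to $0$ (because $\widetilde p_{j,n}(t)\le 1$), while $\xi\int_0^t e^{-\xi\tau}\widetilde p_{0,n}(\tau)\,d\tau$ increases to a finite limit (again because $\widetilde p_{0,n}(\tau)\le 1$). Hence
\[
 q_n=\xi\int_0^\infty e^{-\xi\tau}\,\widetilde p_{0,n}(\tau)\,d\tau ,
\]
i.e.\ $q_n$ is $\xi$ times the Laplace transform of $\widetilde p_{0,n}(\cdot)$ at $\xi$, and the whole task reduces to evaluating this integral.

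Next I would substitute the explicit expression \eqref{prob_2N} with $j=0$. Writing $x=e^{-(\lambda+\mu)\tau}$ and collecting the powers of $\mu$, of $1-x$ and of $\mu+\lambda x$ that occur inside and outside the sum, one simplifies
\[
 \widetilde p_{0,n}(\tau)=\frac{1}{(\lambda+\mu)^{2N}}\sum_{i}\binom{N}{i}\binom{N}{N+n-i}\,\mu^{N-i}\lambda^{N+n-i}\,(1-x)^{2N+n-2i}(\lambda+\mu x)^{i}(\mu+\lambda x)^{i-n},
\]
the sum running over $\max\{0,n\}\le i\le\min\{N,N+n\}$, a range on which one checks $i-n\ge 0$ and $2N+n-2i\ge 0$. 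Performing the change of variable $x=e^{-(\lambda+\mu)\tau}$ in $\int_0^\infty e^{-\xi\tau}(\cdots)\,d\tau$ turns each summand into $\frac{1}{\lambda+\mu}\int_0^1 x^{\xi/(\lambda+\mu)-1}(1-x)^{2N+n-2i}(\lambda+\mu x)^{i}(\mu+\lambda x)^{i-n}\,dx$, and factoring $(\lambda+\mu x)^{i}=\lambda^{i}(1+\tfrac{\mu}{\lambda}x)^{i}$ and $(\mu+\lambda x)^{i-n}=\mu^{i-n}(1+\tfrac{\lambda}{\mu}x)^{i-n}$ casts it into the Euler-type integral representation of the Appell function
\[
 F_1(a,b,b',c;w,z)=\frac{\Gamma(c)}{\Gamma(a)\Gamma(c-a)}\int_0^1 u^{a-1}(1-u)^{c-a-1}(1-wu)^{-b}(1-zu)^{-b'}\,du ,\qquad \Re c>\Re a>0 .
\]

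I would then read off the parameters: $a=\xi/(\lambda+\mu)$, $b=-i$, $b'=n-i$, $c=\xi/(\lambda+\mu)+2N+n-2i+1$, $w=-\mu/\lambda$, $z=-\lambda/\mu$; the admissibility condition $\Re c>\Re a>0$ holds uniformly in $i$ thanks to the inequalities noted above, and since $b,b'$ are non-positive integers $F_1$ actually terminates (it is a polynomial), so no convergence subtlety arises. Using $\Gamma(a)\Gamma(c-a)/\Gamma(c)=B\!\left(2N+n-2i+1,\tfrac{\xi}{\lambda+\mu}\right)$ and recombining the powers of $\lambda$ and $\mu$ (which collapse to $\lambda^{N+n}\mu^{N-n}$, together with the extra $(\lambda+\mu)^{-1}$ from the change of variable) delivers exactly \eqref{eq:qn}. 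The computation is essentially bookkeeping; the only step requiring genuine care is matching the exponents of the factored integrand with the quadruple $(a,b,b',c)$ of the chosen representation of $F_1$ — keeping the signs straight, since $b$ and $b'$ come out negative — and checking $\Re c>\Re a>0$ for every index $i$ in the sum. Everything else (the $t\to\infty$ passage, the substitution, and the term-by-term handling of the finite sum) is routine.
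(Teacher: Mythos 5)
Your proposal is correct and follows essentially the same route as the paper: pass to the limit in \eqref{NandNp} to get $q_n=\xi\int_0^\infty e^{-\xi\tau}\widetilde p_{0,n}(\tau)\,d\tau$, substitute \eqref{prob_2N}, change variables to $x=e^{-(\lambda+\mu)\tau}$, and identify the resulting integral with the Euler-type representation of $F_1$ (Gradshteyn--Ryzhik 3.211), with the same parameter matching $a=\xi/(\lambda+\mu)$, $b=2N+n-2i+1$, $\rho=-i$, $\sigma=n-i$, $u=-\mu/\lambda$, $v=-\lambda/\mu$. Your additional remark that $F_1$ terminates because $-i$ and $n-i$ are non-positive integers (so the condition $|x|<1,|y|<1$ on the double series is immaterial) is a welcome clarification the paper leaves implicit.
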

\begin{proof}
From (\ref{NandNp}), in the limit as $t\to +\infty$, one has  
$q_{n} = \xi \int_0^{+\infty} e^{-\xi \tau}\widetilde{p}_{0,n}(\tau) d \tau$,  $n \in S$.
Hence, making use of (\ref{prob_2N}) after some calculations we obtain
\begin{eqnarray}
 q_n \!\! &=& \!\! \xi \frac{\lambda^{N+n}\mu^{N-n}}{(\lambda+\mu)^{2N+1}} 
 \sum_{i=\max\{0,n\}}^{\min\{N,N+n\}} \binom{N}{i}\binom{N}{N+n-i} 
 \nonumber  \\
 &\times & \!\! \int_0^1 y^{-1+\xi/(\lambda+\mu)}(1-y)^{2N+n-2i}
 \left(1+\frac{\mu}{\lambda} \,y\right)^i \left(1+\frac{\lambda}{\mu} \,y\right)^{i-n}dy. 
  \nonumber
\end{eqnarray}
Recalling that (cf.  \cite{GrRy2007}, p. 318, n. 3.211) 
\begin{eqnarray}
 \int_0^1 x^{a-1}(1-x)^{b-1}(1-u x)^{-\rho} (1-v x)^{-\sigma}dx 
 =B(b,a) \,F_1( a,\rho,\sigma,a+b; u,v)\qquad (a>0,\; b>0),
 \nonumber  
\end{eqnarray}
we finally have (\ref{eq:qn}). 
\end{proof}
\par
We note that from Eqs.\ (\ref{eq:wqn}) and (\ref{eq:qn}) one can prove that 
$\lim_{\xi\to 0^+}  q_n = \widetilde q_n$, for $n\in S$. In Figure \ref{figure2} we provide some 
plots of $q_n$ and $\widetilde q_n$ for various choices of $\lambda$, $\mu$ and $\xi$. Specifically, 
cases (a) and (b) show instances in which $\lambda=\mu$, and thus $q_n= q_{-n}$ 
for all $n\in S$. Clearly, when $\xi$ increases then $q_n$ is more peaked in $n=0$. 
Cases (c) and (d) show symmetric plots, due to relation $q_n(\lambda,\mu)=q_{-n}(\mu, \lambda)$, 
$n\in S$, with obvious notation. 
\par
In the following proposition we express the transition probabilities of $M(t)$ as the sum of the 
stationary distribution obtained in Proposition \ref{prop:qn} and a time-varying term that vanishes 
as $t\to +\infty$.
\begin{proposition}
For all $j,n\in S$ and $t\geq 0$ we have 
\begin{eqnarray}
 p_{j,n}(t) \!\! &=& \!\! q_n+\widetilde p_{j,n}(t)e^{-\xi t}
 -\xi \frac{\lambda^{N+n}\mu^{N-n}}{(\lambda+\mu)^{2N}} e^{-\xi t}
 \sum_{i=\max\{0,n\}}^{\min\{N,N+n\}} \binom{N}{i}\binom{N}{N+n-i} 
 \nonumber  \\
 &\times & \!\! \sum_{h=0}^i \binom{i}{h}\left[\frac{\mu}{\lambda}\,e^{-(\lambda+\mu)t}\right]^h
 \sum_{k=0}^{i-n}  \binom{i-n}{k}\left[\frac{\lambda}{\mu}\,e^{-(\lambda+\mu)t}\right]^k
 \nonumber  \\
 &\times & \!\! \frac{1}{\xi+(h+k)(\lambda+\mu)} \,
F\left( \frac{\xi}{\lambda +\mu}+h+k,2(i-N)-n;\frac{\xi}{\lambda +\mu}+h+k+1; e^{-(\lambda+\mu)t}\right),
 \nonumber  \\ 
 \label{eq:pjnt}
\end{eqnarray}
where $q_n$ and $\widetilde p_{j,n}(t)$ are given respectively in (\ref{eq:qn}) and (\ref{prob_2N}), and 
where (cf.\ \cite{GrRy2007}, p.\ 1005, n.\ 9.100) 
$$
F(a,b;c;z)= \sum_{n=0}^{+\infty}\frac{(a)_{n}(b)_{n}}{(c)_{n}}\,\frac{z^n}{n!} 
$$
is the Gauss hypergeometric function. 
\end{proposition}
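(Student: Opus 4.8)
The plan is to start from the basic renewal-type relation \eqref{NandNp} and to rewrite the finite-time integral appearing there as the difference of an improper integral and a tail integral. First I would observe that, as shown in the proof of Proposition \ref{prop:qn}, the improper integral $\xi\int_0^{+\infty}e^{-\xi\tau}\widetilde p_{0,n}(\tau)\,d\tau$ equals the stationary probability $q_n$ of \eqref{eq:qn}. Hence \eqref{NandNp} immediately yields
\[
p_{j,n}(t) = q_n + e^{-\xi t}\,\widetilde p_{j,n}(t) - \xi\int_t^{+\infty}e^{-\xi\tau}\,\widetilde p_{0,n}(\tau)\,d\tau,
\qquad j,n\in S,\ t\geq 0,
\]
so the entire task reduces to evaluating the tail integral in closed form and matching it with the triple-sum term in \eqref{eq:pjnt}. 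The splitting is legitimate since $\widetilde p_{0,n}\le 1$ makes $\int_0^{+\infty}e^{-\xi\tau}\widetilde p_{0,n}(\tau)\,d\tau$ finite.

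For the tail integral I would reuse the substitution employed in the proof of Proposition \ref{prop:qn}: setting $y=e^{-(\lambda+\mu)\tau}$ and using the polynomial form in $y$ of $\widetilde p_{0,n}$ obtained from \eqref{prob_2N} with $j=0$, the tail integral becomes
\[
\xi\,\frac{\lambda^{N+n}\mu^{N-n}}{(\lambda+\mu)^{2N+1}}\sum_{i=\max\{0,n\}}^{\min\{N,N+n\}}\binom{N}{i}\binom{N}{N+n-i}\int_0^{e^{-(\lambda+\mu)t}}y^{\,\xi/(\lambda+\mu)-1}(1-y)^{2N+n-2i}\Bigl(1+\tfrac{\mu}{\lambda}\,y\Bigr)^{i}\Bigl(1+\tfrac{\lambda}{\mu}\,y\Bigr)^{i-n}\,dy,
\]
the only difference from the computation of $q_n$ being the upper limit $e^{-(\lambda+\mu)t}$ in place of $1$. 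Here one checks that throughout the summation range both $i\ge 0$ and $i-n\ge 0$, so the factors $(1+\tfrac{\mu}{\lambda}y)^{i}$ and $(1+\tfrac{\lambda}{\mu}y)^{i-n}$ expand by the (finite) binomial theorem into sums over $h=0,\dots,i$ and $k=0,\dots,i-n$; likewise $2N+n-2i\ge 0$, so the integrand has no singularity at $y=1$ and is integrable at $y=0$ because $\xi/(\lambda+\mu)>0$.

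After inserting those two finite binomial expansions, the integral decomposes into a finite linear combination of incomplete-beta integrals $\int_0^z y^{\,a-1}(1-y)^{\,b}\,dy$ with $a=\xi/(\lambda+\mu)+h+k$, $b=2N+n-2i$ and $z=e^{-(\lambda+\mu)t}$. Applying the identity
\[
\int_0^z y^{\,a-1}(1-y)^{\,b}\,dy=\frac{z^{a}}{a}\,F\!\left(a,-b;a+1;z\right),
\]
which here is a terminating series since $b$ is a nonnegative integer, and then simplifying via $z^{\,\xi/(\lambda+\mu)}=e^{-\xi t}$, $z^{\,h+k}=\bigl[e^{-(\lambda+\mu)t}\bigr]^{h+k}$ and $a^{-1}=(\lambda+\mu)/[\xi+(h+k)(\lambda+\mu)]$, one recovers precisely the triple sum of \eqref{eq:pjnt}, with second hypergeometric parameter $-b=2(i-N)-n$. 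The only delicate point is the bookkeeping of the powers of $e^{-(\lambda+\mu)t}$, of the prefactor $\xi/(\lambda+\mu)$, and of the nested binomial coefficients; there is no genuine analytic obstacle, as every series that appears is finite. A sanity check at $t=0$, where $z=1$ and the tail integral collapses back to $q_n$, recovers the initial condition $p_{j,n}(0)=\widetilde p_{j,n}(0)=\delta_{j,n}$, and letting $t\to+\infty$ recovers $p_{j,n}(t)\to q_n$.
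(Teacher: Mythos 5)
Your proof is correct and follows essentially the same route as the paper: both start from \eqref{NandNp}, substitute $y=e^{-(\lambda+\mu)\tau}$ in the integral of $\widetilde p_{0,n}$, expand binomially, and arrive at the same Gauss hypergeometric term. The only (cosmetic) difference is that you split off $q_n$ at the outset via $\int_0^t=\int_0^{+\infty}-\int_t^{+\infty}$ and invoke the incomplete-beta identity $\int_0^z y^{a-1}(1-y)^b\,dy=\frac{z^a}{a}F(a,-b;a+1;z)$ directly, whereas the paper also expands $(1-y)^{2N+n-2i}$, integrates termwise, identifies the $q_n$ contribution a posteriori, and then resums the resulting alternating sum into the same ${}_2F_1$ --- two equivalent organizations of the identical computation.
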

\begin{proof}
In order to obtain the transition probabilities of $M(t)$ we evaluate the integral in the right-hand-side 
of (\ref{NandNp}). Due to (\ref{prob_2N}) and after some calculations, for $t\geq 0$ and $n\in S$ we have 
\begin{eqnarray}
 \int_0^t e^{-\xi \tau} \widetilde{p}_{0,n}(\tau) d \tau
 = \frac{\lambda^{N+n}\mu^{N}}{(\lambda+\mu)^{2N}}  
 \sum_{i=\max\{0,n\}}^{\min\{N,N+n\}} \binom{N}{i}\binom{N}{N+n-i} 
 (\lambda\mu)^{-i} A_{n,i}(t),
 \nonumber  
\end{eqnarray}
where 
\begin{eqnarray}
 A_{n,i}(t)\!\! &=& \!\! \int_0^t e^{-\xi \tau}\left(1-e^{-(\lambda+\mu)\tau}\right)^{2N+n-2i}
 \left(\mu+\lambda e^{-(\lambda+\mu)\tau}\right)^{i-n}
  \left(\lambda+ \mu e^{-(\lambda+\mu)\tau}\right)^{i} d\tau 
 \nonumber  \\
 &=& \!\! \frac{1}{\lambda+\mu}\int_{e^{-(\lambda+\mu)t}}^1 y^{-1+\xi/(\lambda+\mu)}
 (1-y)^{2N+n-2i}(\mu+\lambda y)^{i-n}(\lambda+ \mu y)^{i} dy.
 \nonumber 
\end{eqnarray}
Thanks to binomial expansions we get 
\begin{eqnarray}
 A_{n,i}(t)\!\! &=& \!\! \frac{1}{\lambda+\mu}\frac{(\lambda\mu)^i}{\mu^n} 
 \sum_{\ell=0}^{2N+n-2i}(-1)^{\ell}\binom{2N+n-2i}{\ell}\sum_{h=0}^i
 \binom{i}{h}\left(\frac{\mu}{\lambda}\right)^h 
 \nonumber  \\
 &\times& \!\! \sum_{k=0}^{i-n}\binom{i-n}{k} \left(\frac{\lambda}{\mu}\right)^k
 \left(\frac{\xi}{\lambda+\mu}+\ell +h+k\right)^{-1}
 \left(1-e^{-(\lambda+\mu) (\frac{\xi}{\lambda+\mu}+\ell +h+k)t}\right),
 \nonumber 
\end{eqnarray}
and thus, for $t\geq 0$, 
\begin{eqnarray}
 \int_0^t e^{-\xi \tau} \widetilde{p}_{0,n}(\tau) d \tau
 \!\! &=& \!\! \frac{q_n}{\xi}
 -\frac{\lambda^{N+n}\mu^{N-n}}{(\lambda+\mu)^{2N}}  e^{-\xi t}
 \sum_{i=\max\{0,n\}}^{\min\{N,N+n\}} \binom{N}{i}\binom{N}{N+n-i} 
 \nonumber \\
 &\times& \!\! \sum_{h=0}^i \binom{i}{h}
 \left(\frac{\mu}{\lambda}\right)^h e^{-(\lambda+\mu)h t}
 \sum_{k=0}^{i-n}\binom{i-n}{k} \left(\frac{\lambda}{\mu}\right)^k e^{-(\lambda+\mu)k t}
 \nonumber \\
 &\times& \!\! \sum_{\ell=0}^{2N+n-2i}(-1)^{\ell}\binom{2N+n-2i}{\ell} 
 \frac{e^{-(\lambda+\mu)\ell t}}{\xi+(h+k+\ell)(\lambda+\mu)} .
  \nonumber 
\end{eqnarray}
Hence, since
$$
 \sum_{\ell=0}^{m}(-1)^{\ell}\binom{m}{\ell} \frac{1}{c+d\ell} e^{-d \ell t}
 =\frac{1}{c}\,F\left( \frac{c}{d},-m;1+\frac{c}{d}; e^{-d t}\right),
$$
from (\ref{NandNp}) we finally obtain (\ref{eq:pjnt}). 
\end{proof}
%
%
\begin{figure}[t]
\centering
\subfigure[$\lambda=0.6, \mu=0.6, \xi=0.5,j=6$]{\includegraphics[width=0.45\textwidth]{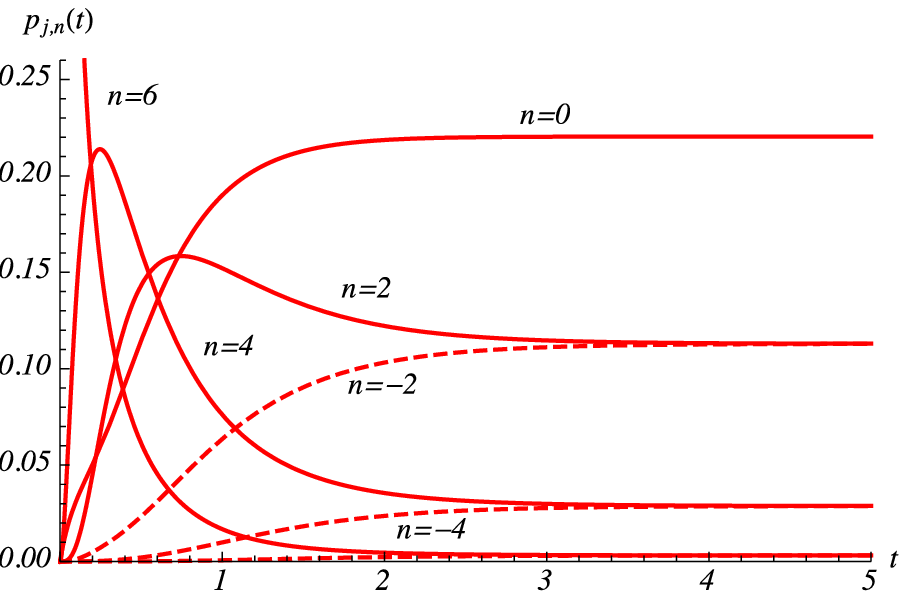}}\qquad
\subfigure[$\lambda=0.6, \mu=0.6, \xi=1.0,j=6$]{\includegraphics[width=0.45\textwidth]{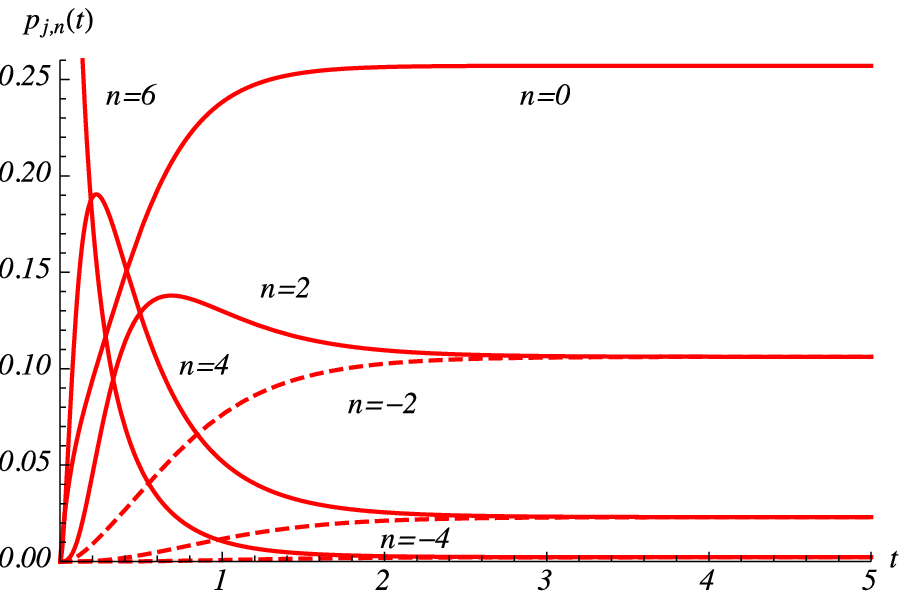}}\\
\subfigure[$\lambda=0.2, \mu=0.6, \xi=0.5,j=6$]{\includegraphics[width=0.45\textwidth]{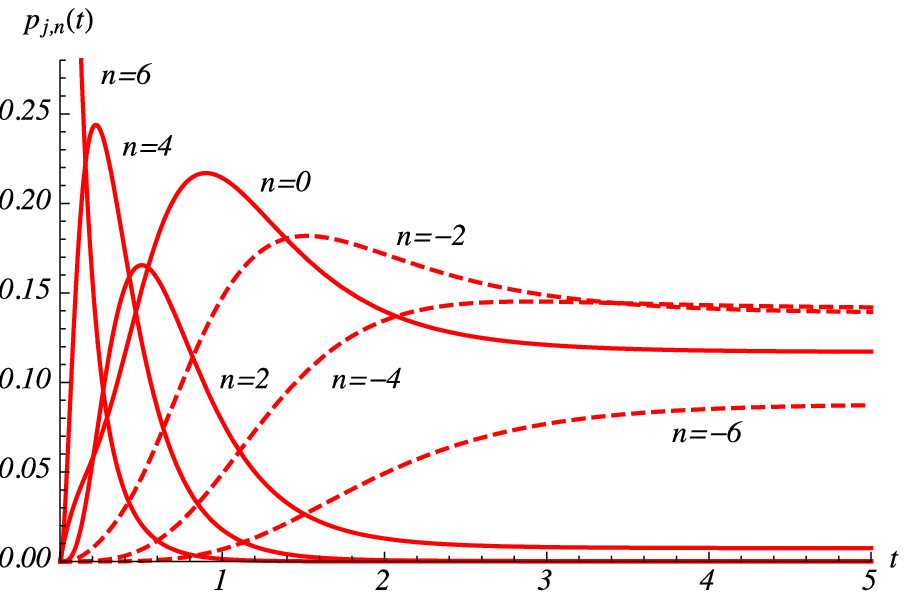}}\qquad
\subfigure[$\lambda=0.6, \mu=0.2, \xi=0.5,j=-6$]{\includegraphics[width=0.45\textwidth]{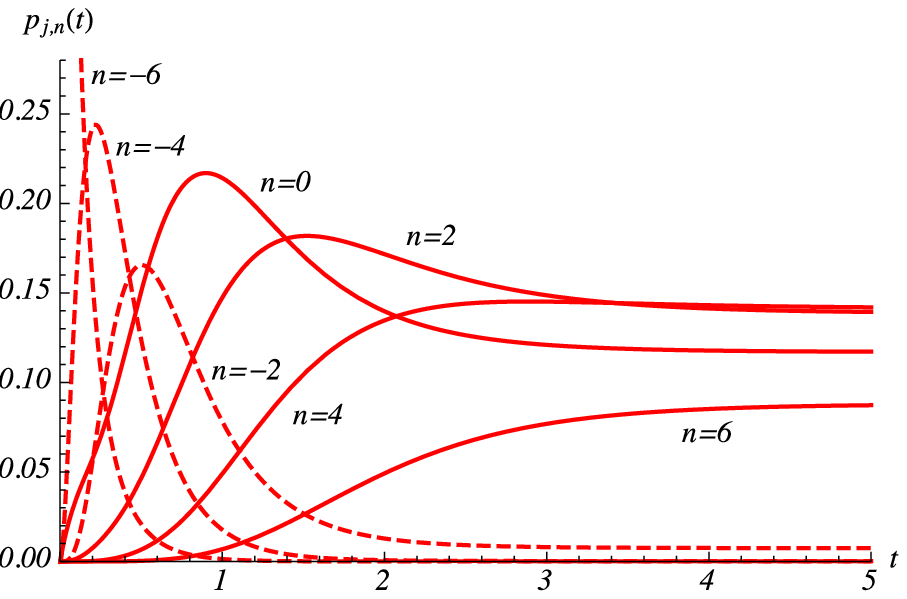}}\\
\caption{\footnotesize Transient probabilites $p_{j,n}(t)$ for $N=10$.}
\label{figure3}
\end{figure}
%
\begin{figure}[t]
\centering
\subfigure[$\lambda=0.6, \mu=0.6$]{\includegraphics[width=0.45\textwidth]{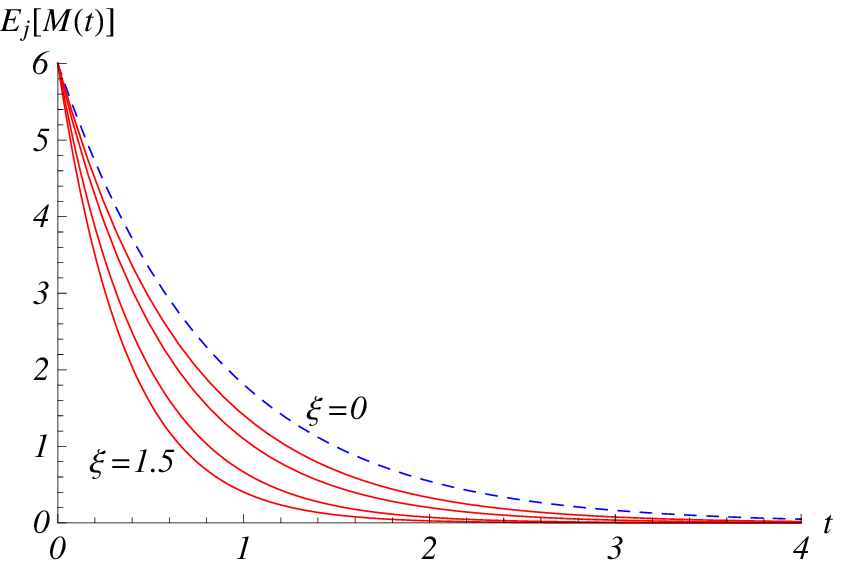}}\qquad
\subfigure[$\lambda=0.6, \mu=0.6$]{\includegraphics[width=0.45\textwidth]{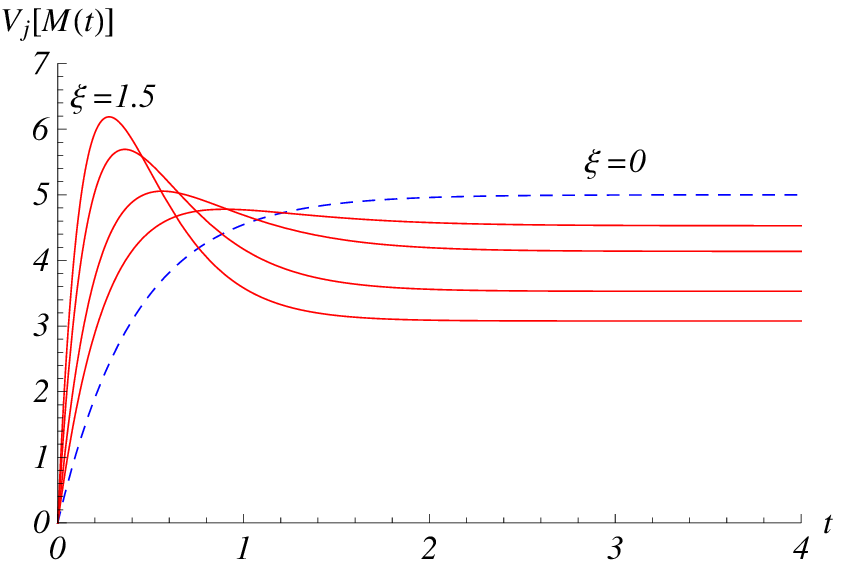}}\\
\subfigure[$\lambda=0.6, \mu=0.2$]{\includegraphics[width=0.45\textwidth]{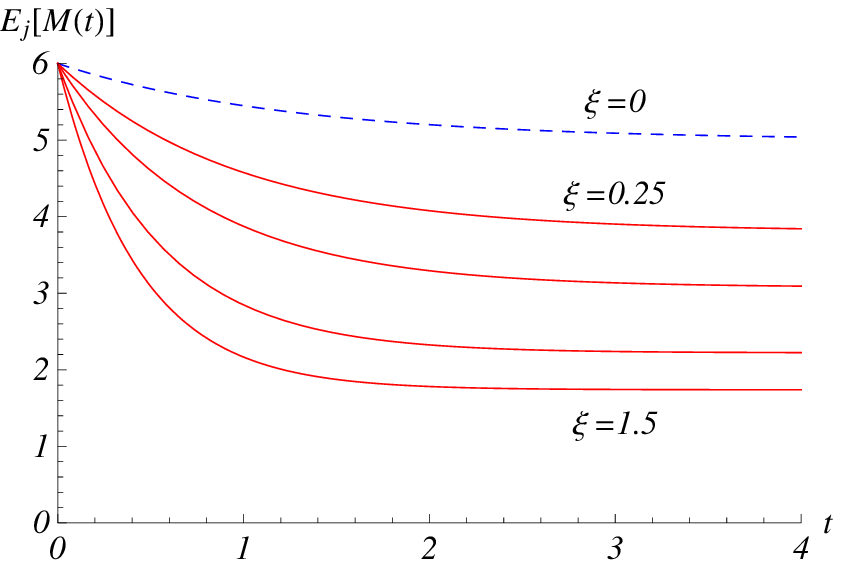}}\qquad
\subfigure[$\lambda=0.6, \mu=0.2$]{\includegraphics[width=0.45\textwidth]{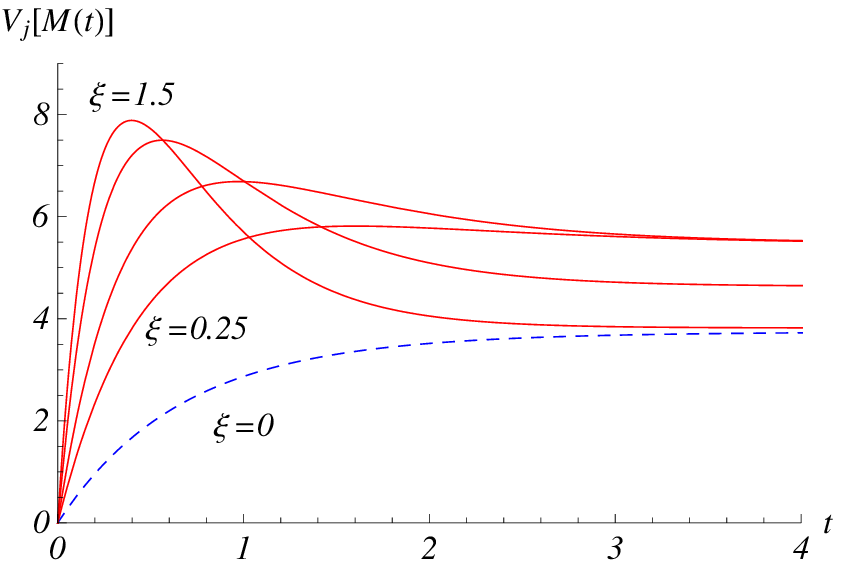}}\\
\caption{\footnotesize The mean  and the variance of $M(t)$ for $j=6$, $N=10$ and $\xi=0.25, 0.5, 1.0, 1.5$. 
The dashed curves indicate the mean and the variance of $\widetilde M(t)$.}
\label{figure4}
\end{figure}
\par
In analogy with (\ref{eq:symmwpjn}), from (\ref{eq:pjnt}) one can obtain  the following symmetry property, 
with obvious notation:
\begin{equation} 
 p_{j,n}(t;\lambda,\mu) = p_{-j,-n}(t;\mu,\lambda),
 \qquad j, n \in S, \;\; t \geq 0. 
 \label{eq:symmpjn}
\end{equation}
Some plots of the transient probabilities (\ref{eq:pjnt}) are given in Figure \ref{figure3}. 
From cases (a) and (b) we see that $p_{j,0}(t)$ increases when $\xi$ grows. Cases (c) and (d) 
show instances where the symmetry property (\ref{eq:symmpjn}) is satisfied. 
\par
From relation (\ref{NandNE}), the moments of $M(t)$ can be evaluated making use of the moments 
of the process $\widetilde M(t)$. Specifically, recalling (\ref{eq:medMtilde}) and (\ref{eq:varMtilde}), 
for $t\geq 0$ one has:
\begin{eqnarray} 
 E_j[M(t)]\!\! &=& \!\! j e^{-(\lambda+\mu+\xi)t}+\frac{(\lambda-\mu)N}{\lambda+\mu+\xi}
 \left(1-e^{-(\lambda+\mu+\xi)t}\right),
  \label{cat_dis_M1}
  \\
 E_j[M^2(t)]\!\! &=& \!\! \frac{N}{(\lambda+\mu+\xi)[\xi +2(\lambda+\mu)]}
 \Big\{4\lambda\mu+2 N(\mu-\lambda)^2+\xi (\lambda+\mu)\Big\}
   \nonumber \\
 &+& \!\! \frac{(\mu-\lambda)(1-2N)}{(\lambda+\mu)(\lambda+\mu+\xi)}
 [N(\mu-\lambda) +j(\lambda+\mu+\xi)]e^{-(\lambda+\mu+\xi)t} 
    \nonumber \\
 &+& \!\! \frac{1}{(\lambda+\mu)[\xi +2 (\lambda+\mu)]}\Big\{2N^2(\mu-\lambda)^2-2N(\lambda^2+\mu^2)
-j \xi (\mu-\lambda)-2j(\mu^2-\lambda^2)
   \nonumber \\
 &+& \!\! 4jN (\mu^2-\lambda^2)+2j \xi N(\mu-\lambda)+2j^2(\lambda+\mu)^2
 +j^2\xi (\lambda+\mu)\Big\} e^{-(2\lambda+2\mu+\xi)t}.
   \label{cat_dis_M2}
\end{eqnarray}
Hence, in the limit we obtain 
\begin{eqnarray} 
 \lim_{t\to +\infty} E_j[M(t)]\!\! &=& \!\! \frac{(\lambda-\mu)N}{\lambda+\mu+\xi}, 
 \nonumber  
\end{eqnarray}
\begin{eqnarray} 
 \lim_{t\to +\infty} E_j[M^2(t)]\!\! 
 &=& \!\!  N\,\frac{4\lambda\mu+2 N(\mu-\lambda)^2+\xi (\lambda+\mu)}{(\lambda+\mu+\xi)[\xi +2(\lambda+\mu)]}\,.
  \nonumber  
\end{eqnarray}
In Figure \ref{figure4} we provide some plots of mean and variance of $M(t)$ and $\widetilde M(t)$. 
We point out that $E_j[M(t)]$ is increasing in $t\geq 0$ if $j<\frac{(\lambda-\mu)N}{\lambda+\mu+\xi}$, 
whereas it is decreasing in $t\geq 0$ if the inequality is reversed. 
\par
We conclude this section by noting that, if $\lambda=\mu$, by virtue of (\ref{gandgj}) and (\ref{eq:wgkt}) 
the first-passage-time density of $\widetilde M(t)$ through state 0, with $\widetilde M(0)=j$, can be 
expressed as follows: 
$$
g_{j,0}(t) =e^{-\xi t} \mu(N+1)\,{\rm sgn}(j)[\widetilde{p}_{j,1}(t)-\widetilde{p}_{j,-1}(t)] 
 + \xi \,e^{-\xi t}\left[1-\mu(N+1)\,{\rm sgn}(j) \int_0^t 
 [\widetilde{p}_{j,1}(\tau)-\widetilde{p}_{j,-1}(\tau)]\,d\tau\right],
$$
for $j \in S\setminus\{0\}$,  $t\geq 0$. Some plots are shown in Figure \ref{figure5}. Note that, 
due to (\ref{gandgj}), we have $g_{j,0}(0) =\xi$. 
%
\begin{figure}[t]
\centering
\subfigure[$\lambda=0.6, \mu=0.6, j=3$]{\includegraphics[width=0.45\textwidth]{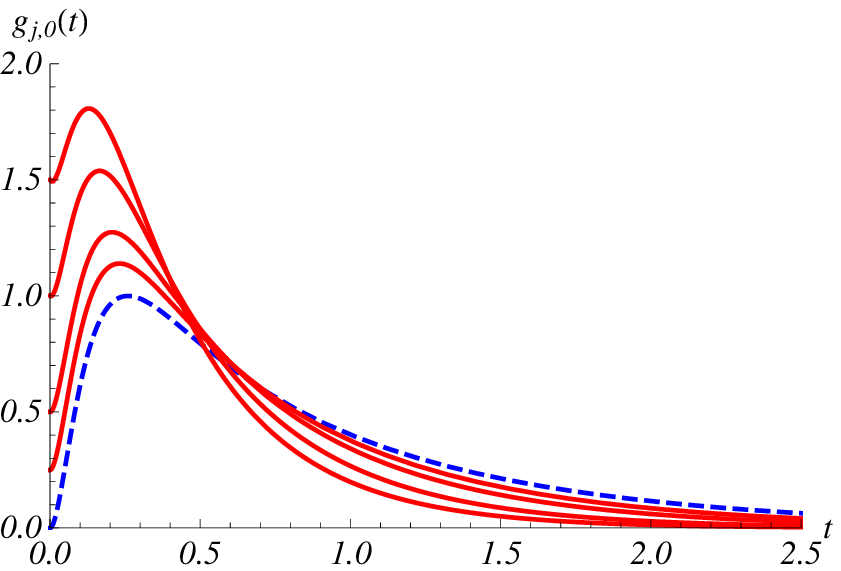}}\qquad
\subfigure[$\lambda=0.6, \mu=0.6, j=6$]{\includegraphics[width=0.45\textwidth]{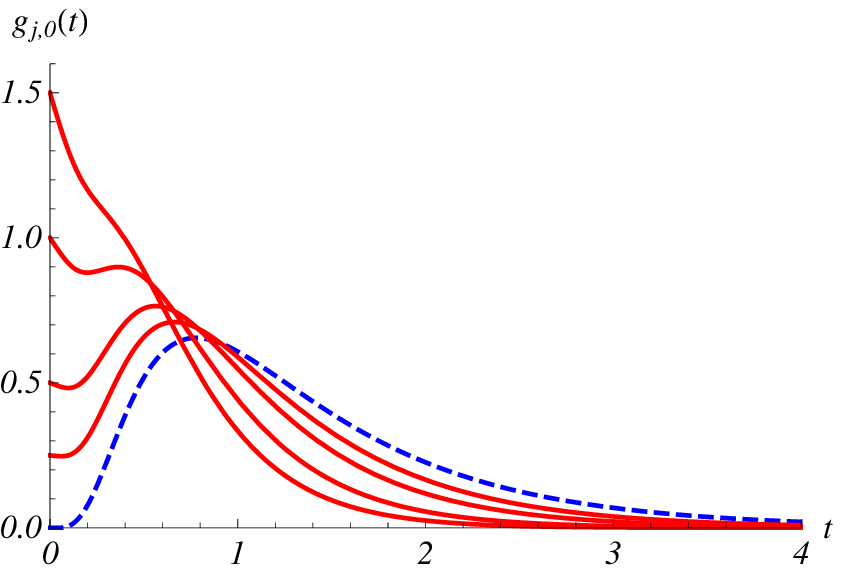}}\\
\caption{\footnotesize The first passage time density  $g_{j,0}(t)$ for $N=10$ and $\xi=0.25, 0.5, 1.0, 1.5$ (bottom up 
near the origin).
The dashed curves indicate $\widetilde g_{j,0}(t)$.}
\label{figure5}
\end{figure}
%
\section{A jump-diffusion approximation}\label{sec3}
We point out that the expressions obtained in Eqs.\ (\ref{eq:qn}) and (\ref{eq:pjnt}) are computationally 
intractable when $N$ is large. This is compelling us to adopt an approximation procedure aiming  
to obtain quantitative results that are effective for large times. We recall that 
Kac \cite{Ka1947} employed a typical scaling procedure in order to obtain a diffusion 
approximation of the discrete-time Ehrenfest model leading to the Ornstein-Uhlenbeck process 
(see, also, Hauert et al.\ \cite{HaNaSc2004}).
By adopting a similar scaling, in this section we propose a jump-diffusion approximation for the 
process $M(t)$. We first rename the parameters related to the birth and death   rates, 
given in Eqs.\ (\ref{eq:brate}) and (\ref{eq:drate}), by setting 
\begin{equation}
 \lambda=\frac{\alpha}{2}+\frac{\gamma}{2} \,\epsilon, 
 \qquad 
 \mu=\frac{\alpha}{2}-\frac{\gamma}{2} \,\epsilon, 
 \label{eq:param}
\end{equation}
with $\epsilon>0$, $\alpha>0$ and $-\frac{\alpha}{\epsilon}<\gamma<\frac{\alpha}{\epsilon}$. 
Note that $\epsilon$ is a positive constant that plays a relevant role in the approximating procedure. 
\par
For all $t\geq 0$, consider the position $M^*_{\epsilon}(t)=M(t)\,\epsilon$, so that 
$\{M^*_{\epsilon}(t),\;t\geq 0\}$ is a continuous-time stochastic process with state-space 
$\{-N\epsilon, -N\epsilon+\epsilon,\ldots, -\epsilon,0,\epsilon,\ldots,N\epsilon-\epsilon,N\epsilon\}$ 
and transient probabilities, for $j,n \in S$ and $t\geq 0$, 
\begin{eqnarray}
 p^*_{j,n}(t) \!\!  &:=& \!\!  P\left\{M^*_{\epsilon}(t)=n \epsilon\,|\, M^*_{\epsilon}(0)=j \epsilon \right\} 
 \nonumber \\
 &=& \!\! P\left\{n\epsilon \leq M^*_{\epsilon}(t)<(n+1)\epsilon \,|\, M^*_{\epsilon}(0)=j \epsilon\right\}
 \equiv p_{j,n}(t). 
 \label{eq:pstar}
\end{eqnarray}
Under suitable limit conditions the scaled process $M^*_{\epsilon}(t)$  converges weakly to a 
jump-diffusion process $\{X(t);\;t\geq 0\}$ having state-space $\mathbb{R}$ and transition density 
$$
 f(x,t\,|\,y)=\frac{\partial}{\partial x} P\left\{X(t)\leq x\,|\, X(0)=y\right\}, \qquad t\geq 0. 
$$
Indeed, with reference to system (\ref{eq:system}), we make use of (\ref{eq:pstar}) 
and assume that $p^*_{j,n}(t)\simeq f(x,t\,|\,y)\,\epsilon$ for $\epsilon$ close to 0, with $x=n \epsilon$ 
and $y=j\epsilon$, and expand  $f$ as Taylor series, with  
\begin{equation}
  \epsilon \to 0^+, \qquad 
  N \to +\infty, \qquad 
  N \epsilon \to +\infty, \qquad 
  N \epsilon^2 \to \nu>0. 
  \label{eq:limiti}
\end{equation}
We point out that, due to limits (\ref{eq:limiti}), the positions (\ref{eq:param}) imply that the 
birth and death parameters $\lambda$ and $\mu$ both tend to $\alpha/2$. 
We also remark that the above scaling procedure does not affect the catastrophe rate (\ref{eq:crate}). 
\par
Hence, under limits (\ref{eq:limiti}), from the first and second equation of  (\ref{eq:system})
we obtain the following partial differential equation, with $x\in \mathbb{R}$, 
$y\in \mathbb{R}$, $t\geq 0$: 
\begin{equation}
 {\partial\over\partial t}\,f(x,t\,|\,y)
 ={\partial\over\partial x} 
 \left\{(\alpha x-\gamma \nu)f(x,t\,|\,y)\right\}
 +{1\over 2}\,{\partial^2\over\partial x^2}\left\{\alpha \nu f(x,t\,|\,y) \right\}
 -\xi \, f(x,t\,|\,y)+\xi\,\delta(x),
 \label{eq:equdiff}
\end{equation}
whereas from the third and fourth equation of  (\ref{eq:system}) we have  
$$
 \lim_{x\to \pm \infty}f(x,t\,|\,y)=0, 
$$
for $t\geq 0$ and $y\in \mathbb{R}$. Moreover, initial condition (\ref{eq:initialp}) gives the following 
delta-Dirac initial condition:
\begin{equation}
 \lim_{t\to 0^+}f(x,t\,|\,y)=\delta(x-y).
  \label{eq:initcond}
\end{equation}
We remark that Eq.\ $(\ref{eq:equdiff})$ is the Fokker-Planck equation for a temporally homogeneous 
jump-diffusion process $\{X(t),\;t\geq 0\}$ with state-space $\mathbb{R}$, having linear drift and 
constant infinitesimal variance. 
The jumps occur with constant rate $\xi$, and each jump makes $X(t)$ instantly attain the state 0. 
Note that modified Fokker-Planck equations similar to $(\ref{eq:equdiff})$ have also been employed 
in Evans and Majumdar \cite{EvMa2011a}, \cite{EvMa2011b} for the analysis of Brownian motion 
with resetting. 
\par
In the sequel, for simplicity we set 
\begin{equation}
 \beta=\frac{\gamma \nu}{\alpha}.
  \label{eq:parametri}
\end{equation}
%
\subsection{The continuous process in absence of catastrophes}
We point out that if $\xi\to 0^+$ then (\ref{eq:equdiff}) yields the Fokker-Planck equation of an 
Ornstein-Uhlenbeck process on $\mathbb{R}$, denoted by $\{\widetilde X(t), t\geq 0\}$, 
with initial condition (\ref{eq:initcond}), and having drift and infinitesimal variance 
\begin{equation}
 A_1(x)=-\alpha(x-\beta), \qquad A_2(x)= \alpha\nu,
  \label{eq:mominf}
\end{equation}
with $\alpha>0$, $\beta\in \mathbb{R}$ and $\nu>0$. This process has state-space $\mathbb{R}$ 
and transition density denoted as 
$$
 \widetilde f(x,t\,|\,y)=\frac{\partial}{\partial x} P\left\{\widetilde X(t)\leq x\,|\, \widetilde X(0)=y\right\}, 
 \qquad t\geq 0. 
$$
We remark that, due to (\ref{eq:param}) and (\ref{eq:parametri}), the special case $\beta=0$ arises 
when the birth and death rates $\lambda$ and $\mu$ are equal. In this case the drift of the approximating 
process becomes $A_1(x)=-\alpha x$ so that $\widetilde X(t)$ has an equilibrium point in the state $0$. 
\par
Since the dynamics of $X(t)$ and $\widetilde X(t)$ are closely related, here we recall some useful 
results concerning process $\widetilde X(t)$. Clearly, the transition density of $\widetilde X(t)$ is 
normal with mean and variance given respectively by  
\begin{equation}
 E_y[\widetilde X(t)]= \beta\bigl(1-e^{-\alpha t}\bigr)+ y\,e^{-\alpha t},
 \qquad 
 V_y[\widetilde X(t)]={ \nu\over 2}\,\bigl(1-e^{-2\alpha t}\bigr).
 \label{mean_variance_tilde}
\end{equation}
Hence, the steady-state density of $\widetilde X(t)$ is given by 
\begin{equation}
 \widetilde W(x)= \lim_{t\to +\infty} \widetilde f(x,t\,|\,y)
 =\frac{1}{\sqrt{\pi\nu}}\,\exp\left\{-\frac{(x-\beta)^2}{\nu}\right\}, 
 \qquad x\in \mathbb{R}. 
 \label{eq:steadystatewwx}
\end{equation}
Note that, from (\ref{eq:wqn}) and (\ref{eq:steadystatewwx}), for $x=n \epsilon$,  
making use of (\ref{eq:param}), (\ref{eq:limiti}) and (\ref{eq:parametri}), we obtain 
\begin{equation}
 \lim_{\epsilon\to 0^+} \frac{1}{\epsilon}\, \widetilde{q}_n^{(\epsilon)}
 =\widetilde{W}(x), \qquad x\in \mathbb{R}, 
 \label{eq:steadystatelim}
\end{equation}
where $\widetilde{q}_n^{(\epsilon)}=\lim_{t\to \infty} \widetilde p_{j,n}(t)$ under positions 
$\lambda=\frac{\alpha}{2}+\frac{\gamma}{2} \,\epsilon$ and 
$\mu=\frac{\alpha}{2}-\frac{\gamma}{2} \,\epsilon$. 
\par
Denoting by $\widetilde f_s(x\,|\,y)=\int_{0}^{+\infty}e^{-s t}\widetilde f(x,t\,|\,y)\,dt$ the 
Laplace transform of $\widetilde f(x,t\,|\,y)$ one has (see \cite{Siegert_1951})
\begin{eqnarray}
\widetilde f_s(x\,|\,y) \!\! 
&=& \!\!  {2^{s/\alpha-1}\over \pi \alpha\sqrt{ \nu}}\,\Gamma\biggl({s\over 2\alpha}\biggr)\,
\Gamma\biggl({1\over 2}+{s\over 2\alpha}\biggr)\,
\exp\biggl\{-{(x-y)(x+y-2\beta)\over 2\nu}\biggr\}
\nonumber \\
& \times & \!\! {\displaystyle D_{-s/\alpha}\biggl(-{\sqrt{2\over\nu}}\,((x\wedge y)-\beta)\biggr)\,
D_{-s/\alpha}\biggl({\sqrt{2\over\nu}}\,((x\vee y)-\beta)}\biggr),
 \qquad s>0,
\label{laplace_pdf_free}
\end{eqnarray}
where, as usual, $\wedge$ and $\vee$ mean $\min$ and $\max$, respectively,
and $\Gamma(\nu)$ denotes the Euler gamma function. Moreover, $D_{-\nu}(x)$ is 
the parabolic cylinder function defined as (cf.\ \cite{GrRy2007}, p.\ 1028, no.\ 9.240)
$$
 D_p(z)=2^{p/2}e^{-z^2/4}\Biggl\{{\displaystyle{\sqrt\pi\over\Gamma\bigl({1-p\over 2}\bigr)}}\,
 \phi\Bigl(-{p\over 2},{1\over 2};{z^2\over 2}\Bigr)
 -{\displaystyle{\sqrt{2\pi}\,z\over\Gamma\bigl(-\,{p\over 2}\bigr)}}\,
 \phi\Bigl({1-p\over 2},{3\over 2};{z^2\over 2}\Bigr)\Biggr\}
$$
in terms of the Kummer function  
$$
\phi(a,c;x)=1+\sum_{n=1}^\infty {(a)_n\over (c)_n}\,{x^n\over n!}.
$$
\par  
Let us denote by 
\begin{equation}
 \widetilde T_{y}=\inf\{t\geq 0: \widetilde X(t)=0\}, \qquad y\in\mathbb{R}\setminus \{0\},
 \label{defTy}
\end{equation}
the first-passage time of $\widetilde X(t)$ through 0, with $\widetilde X(0)=y$, and let $\widetilde g(0,t\,|\,y)$ 
be the corresponding density. For $y\in\mathbb{R}\setminus \{0\}$, the Laplace transform of the 
first-passage time density of $\widetilde X(t)$ from $y$ to 0 is given by (cf.\ \cite{Siegert_1951}): 
\begin{equation}
\widetilde g_s(0\,|\,y)= 
 \exp\biggl\{{y\over 2\nu}\bigl(y-2\beta \bigr)\biggr\}\;{\displaystyle 
 {D_{-s/\alpha}\biggl({\rm sgn}(y)(y-\beta)\,{\sqrt{2\over\nu}}\biggr)}\over 
{\displaystyle D_{-s/\alpha}\biggl(-{\rm sgn}(y)\beta\,{\sqrt{2\over\nu}}\,\biggr)}},
 \qquad s>0,
\label{laplace_g_tilde}
\end{equation}
where ${\rm sgn}(j)=1$ if $j>0$, ${\rm sgn}(j)=-1$ if $j<0$ and  ${\rm sgn}(0)=0$. 
Moreover, if $\beta=0$, since  
\begin{equation}
 D_\nu(0)=\sqrt\pi\,2^{\nu/2}\Bigl[\Gamma\Bigl({1-\nu\over 2}\Bigr)\Bigr]^{-1},
 \label{eq:D0}
\end{equation}
from (\ref{laplace_g_tilde}) it follows
\begin{equation}
\widetilde g_s(0\,|\,y)={2^{s/(2\alpha)}\over \sqrt\pi}\,
\Gamma\Bigl({1\over 2}+{s\over 2\alpha}\Bigr)\,\exp\biggl\{{y^2\over 2\nu}\biggr\}\;
D_{-s/a}\biggl(\sqrt{2\over\nu}|y|\biggr),
\qquad y\neq 0.
\label{laplace_g_tilde_SC}
\end{equation}
Finally, when $\beta=0$, taking the inverse  Laplace transform of (\ref{laplace_g_tilde_SC}), 
for $t\geq 0$ one has the first-passage-time density 
\begin{equation}
 \widetilde g(0,t\,|\,y)={2\alpha|y|e^{-\alpha t}\over \sqrt {\pi\nu} \left(1-e^{-2\alpha t}\right)^{3/2}}
 \exp\biggl\{-\,{ y^2e^{-2\alpha t}\over \nu(1-e^{-2\alpha t})}\biggr\},
 \qquad y\neq 0.
 \label{gtilde_SC}
\end{equation}
%
\subsection{Analysis of the jump-diffusion process}\label{JDP}
The stochastic process $X(t)$ approximating $M(t)$  is an Ornstein-Uhlenbeck jump-diffusion process 
with jumps that occur with rate $\xi$. We remark that certain features of this process have been 
considered in the paper by Pal \cite{Pal2015}. The main characteristics of $X(t)$ can be expressed 
in terms of the analogue functions of the process $\widetilde X(t)$ in the absence of catastrophes. 
Indeed, similarly as Eq.\ (\ref{NandNp}) the transition density of $X(t)$ satisfies the following relation:
\begin{eqnarray} 
 f(x,t\,|\,y) =e^{-\xi t} \widetilde f(x,t\,|\,y) + \xi \int_0^t e^{-\xi \tau}
 \widetilde f(x,\tau\,|\,0) d \tau,
 \qquad  x, y \in \mathbb{R},\; \; t\geq 0.
\label{eq:fandfp}
\end{eqnarray}
This equation has been succesfully exploited in various investigations in the past  
(cf., for instance \cite{dicGN2009} and \cite{GiNo2012}). 
\par
One immediately determines the steady-state density of $X(t)$. Indeed, from (\ref{eq:fandfp}), it 
follows: 
\begin{equation}
 W(x)= \lim_{t\to +\infty}f(x,t\,|\,y)=\xi\,\widetilde f_\xi(x\,|\,0),
 \label{steady_state_density}
\end{equation}
where $\widetilde f_\xi(x\,|\,y)$ is the Laplace transform of the transition density of $\widetilde X(t)$. 
Hence recalling (\ref{laplace_pdf_free}) we have 
\begin{eqnarray}
&&W(x) =   {2^{\xi/\alpha}\over \pi \sqrt{ \nu}}\,\Gamma\biggl(1+{\xi\over 2\alpha}\biggr)\,
\Gamma\biggl({1\over 2}+{\xi\over 2\alpha}\biggr)\,
\exp\biggl\{-{x\,(x-2\beta )\over 2\,\nu}\biggr\}\nonumber\\
&&\hspace{1.3cm}\times D_{-\xi/\alpha}\biggl({\rm sgn}(x)\,\beta\sqrt{2\over\nu}\biggr)\,
 D_{-\xi/\alpha}\biggl({\rm sgn}(x)\,(x-\beta)\,\sqrt{2\over\nu}\biggr),\qquad x\in\mathbb{R}.\label{W(x)}
\end{eqnarray}
We note that the following symmetry property holds:
$W(x;\beta)=W(-x;-\beta)$, for all $x\in\mathbb{R}$ and  $\beta\in\mathbb{R}$. 
Since $D_0(x)=e^{-x^2/4}$, from (\ref {W(x)})  it immediately follows $\lim_{\xi\to 0^+}W(x)=\widetilde W(x)$, 
with $\widetilde W(x)$ given in (\ref{eq:steadystatewwx}).  
%
\begin{figure}[t]
\centering
\subfigure[$\lambda=0.6, \mu=0.6, \xi=0.5,$]{\includegraphics[width=0.45\textwidth]{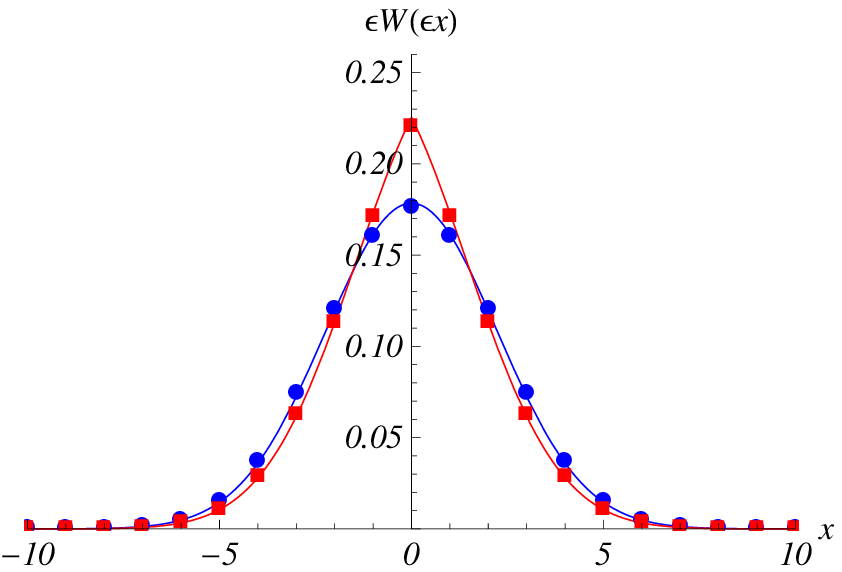}}\qquad
\subfigure[$\lambda=0.6, \mu=0.6, \xi=1.0$]{\includegraphics[width=0.45\textwidth]{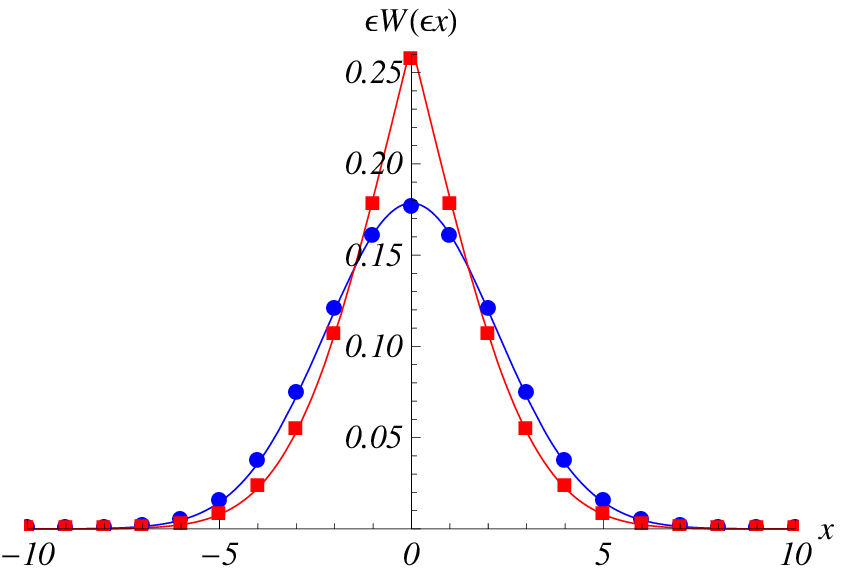}}\\
\subfigure[$\lambda=0.2, \mu=0.3, \xi=0.5$]{\includegraphics[width=0.45\textwidth]{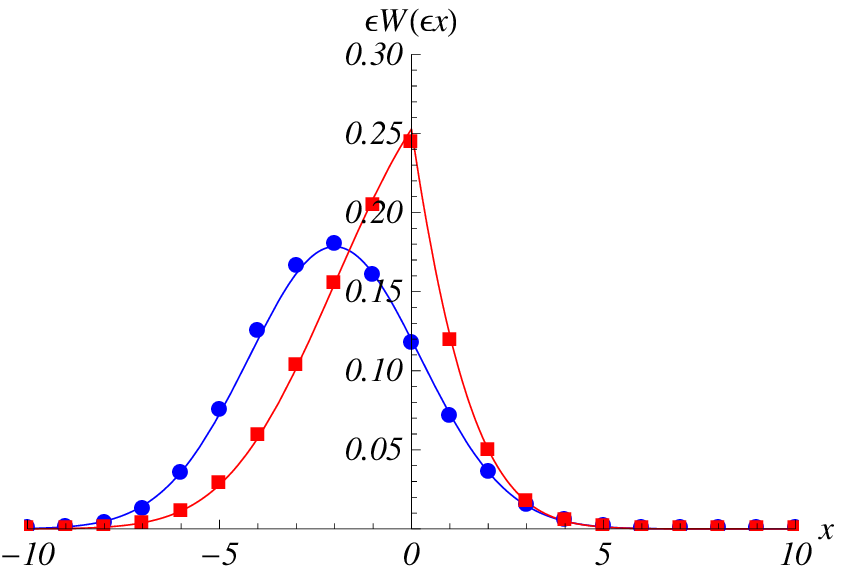}}\qquad
\subfigure[$\lambda=0.3, \mu=0.2, \xi=0.5$]{\includegraphics[width=0.45\textwidth]{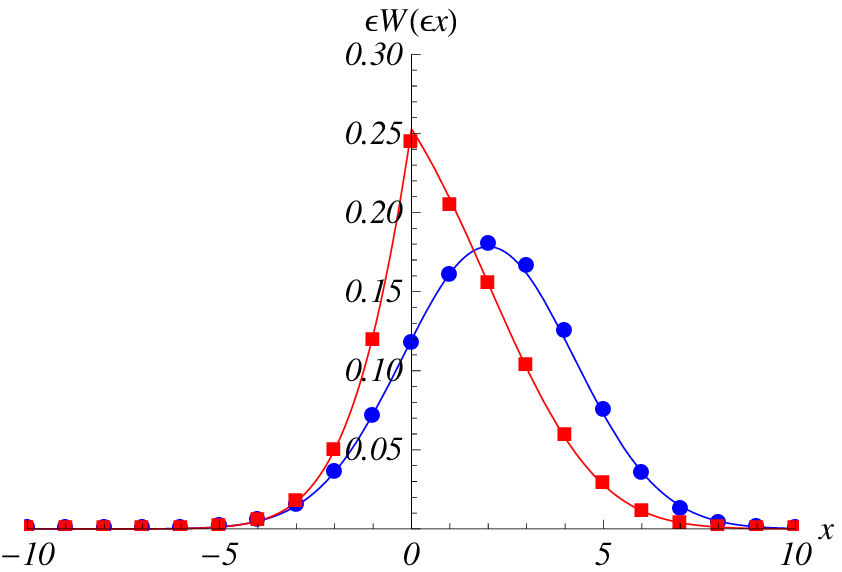}}\\
\caption{\footnotesize 
The steady-state probabilities $q_n$ (square) are compared with $\epsilon\,W(\epsilon\,x)$ 
(red curve), and $\widetilde q_n$ (circle) with $\epsilon\,\widetilde W(\epsilon\,x)$ (blue curve) 
for $N=10$ and $\epsilon=0.01$. 
}
\label{figure6}
\end{figure}
\par 
In order to show the goodness of the jump-diffusion approximation, recalling  (\ref{eq:steadystatelim}), 
in Figure \ref{figure6} we compare the probabilities $q_n$ and $\widetilde q_n$ with 
$\epsilon\,W(\epsilon\,x)$ and $\epsilon\,\widetilde W(\epsilon\,x)$, respectively, for some  
choices of the parameters $\lambda$, $\mu$ and $\xi$. According to (\ref{eq:param}), (\ref{eq:limiti}) 
and (\ref{eq:parametri}), the parameters are $\nu=N\,\epsilon^2$, $\alpha=\lambda+\mu$, 
$\gamma=(\lambda-\mu)/\epsilon$ and $\beta=\gamma\nu/\alpha$. From the plots given in 
Figure \ref{figure6} we have that (i) when $\xi$ grows the probability distributions became more peaked 
near $x=0$, (ii) the goodness of the approximation improves when $\lambda$ and $\mu$ are close, 
(iii) a symmetry arises when $\lambda$ and $\mu$ are interchanged, i.e.\ when $\beta$ is 
changed with $-\beta$. 
\par
Similarly as (\ref{NandNE}), from (\ref{eq:fandfp}) we obtain the following relations for the moments of 
$X(t)$, for $k=1,2,\ldots$: 
\begin{equation}
 E_y[X^k(t)]=e^{-\xi t} E_y[\widetilde X^k(t)]+\xi\int_0^t e^{-\xi \tau} E_0[\widetilde X^k(\tau)] d\tau,
 \qquad y\in\mathbb{R}, \;\; t\geq 0.
  \label{relation_moments}
\end{equation}
Specifically, making use of Eqs.\ (\ref{relation_moments}) and (\ref{mean_variance_tilde}) we have 
the conditional mean of $X(t)$:
\begin{equation}
E_y[X(t)]=y e^{-(\alpha+\xi)\,t}+{\alpha\,\beta \over\xi+\alpha}\, \bigl[1-e^{-(\alpha+\xi)\,t}\bigr], \qquad t\geq 0. 
\label{mean_X}
\end{equation}
and the second order conditional moment:
\begin{eqnarray}
 E_y[X^2(t)] \!\! &=& \!\! {\alpha\nu\over\xi+2\alpha}+{2\alpha^2\beta^2\over(\xi+\alpha)(\xi+2\alpha)}+2\beta\Bigl(y-{\alpha\beta\over\xi+\alpha }\Bigr)\,
 e^{-(\xi+\alpha)\,t}\nonumber\\
 &+&  \!\!  \Bigl(y^2-2\beta y+{2\alpha\beta^2\over\xi+2\alpha}-{\alpha\nu\over\xi+2\alpha}\Bigr) e^{-(\xi+2\alpha)\,t}, 
 \qquad t\geq 0.
 \label{second_moment_X}
\end{eqnarray}
\par
We stress that the validity of the approximating procedure given in  Section~\ref{sec3} 
is confirmed by the following limits, 
that can be easily obtained from Eqs.\ (\ref{cat_dis_M1}) and (\ref{cat_dis_M2}), for $y=j\epsilon$ and  
making use of (\ref{eq:param}), (\ref{eq:limiti}) and (\ref{eq:parametri}): 
$$
 \lim_{\epsilon\to 0^+} \epsilon\, E_j[{M}(t)]=E_y[{X}(t)],
 \qquad 
 \lim_{\epsilon\to 0^+} \epsilon^2\, V_j[{M}(t)]=V_y[{X}(t)]. 
$$
\begin{figure}[t]
\centering
\subfigure[$\lambda=0.6, \mu=0.6$]{\includegraphics[width=0.45\textwidth]{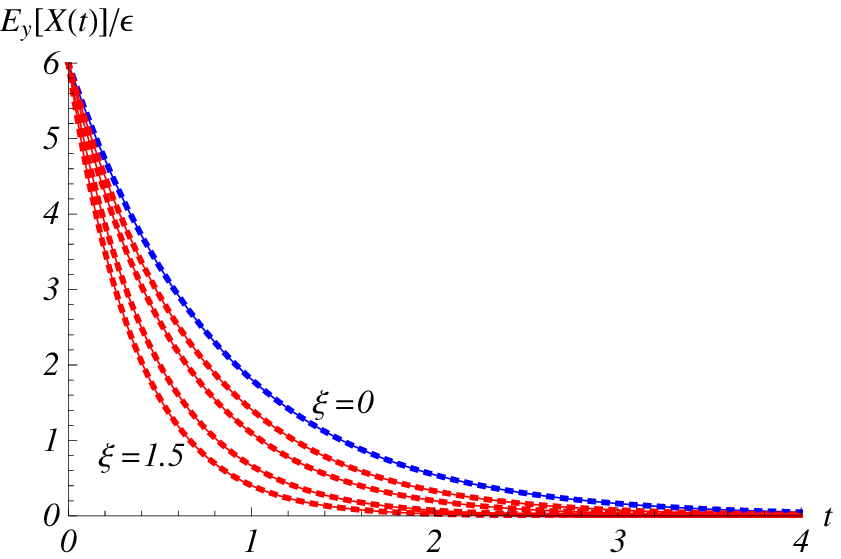}}\qquad
\subfigure[$\lambda=0.6, \mu=0.6$]{\includegraphics[width=0.45\textwidth]{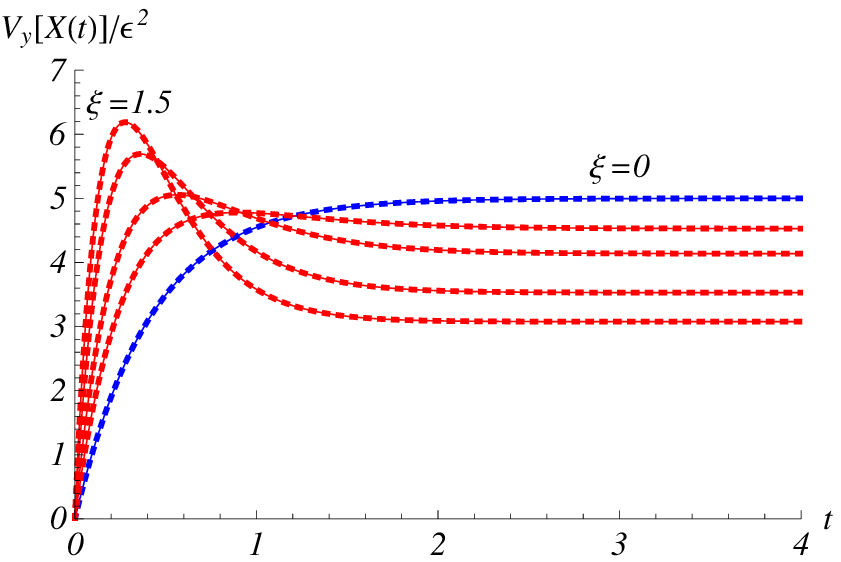}}\\
\subfigure[$\lambda=0.3, \mu=0.2$]{\includegraphics[width=0.45\textwidth]{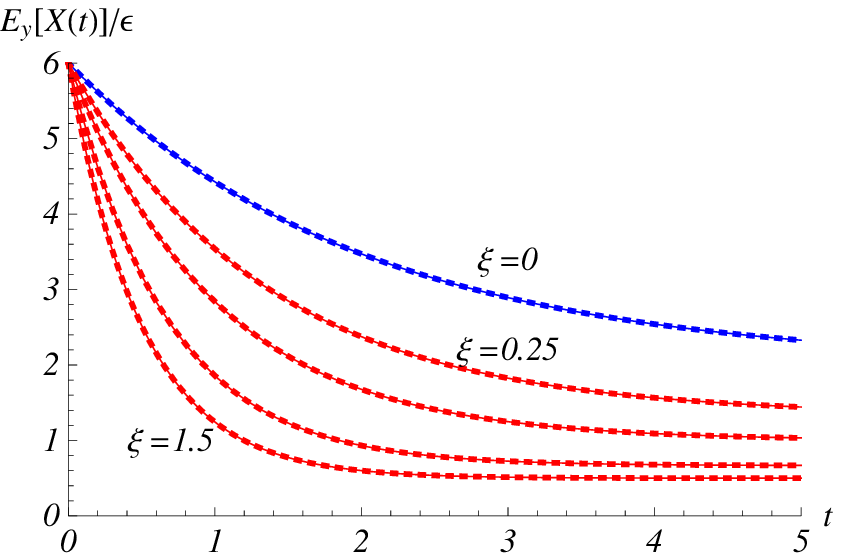}}\qquad
\subfigure[$\lambda=0.3, \mu=0.2$]{\includegraphics[width=0.45\textwidth]{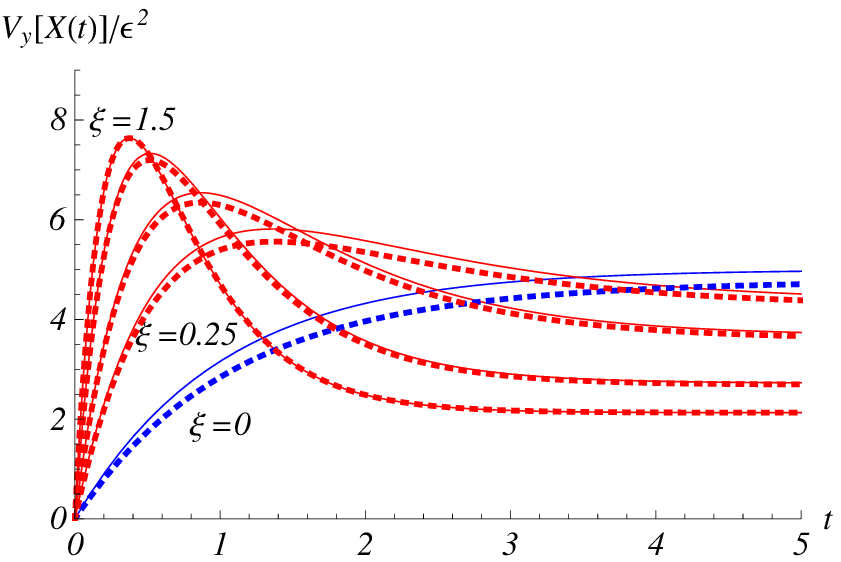}}\\
\caption{\footnotesize The mean  and the variance of $M(t)$ for $j=6$, $N=10$ and $\xi=0, 0.25, 0.5, 1.0, 1.5$ (dotted curves) are compared with
$E_y[X(t)]/\epsilon$ and $V_y[X(t)]/\epsilon^2$ (solid curves) with $\epsilon=0.01$ and $y=j\epsilon$.}
\label{figure7}
\end{figure}
\par
In Figure~\ref{figure7} we compare the mean $E_j[M(t)]$ and the variance $V_j[M(t)]$ with
$E_y[X(t)]/\epsilon$ and $V_y[X(t)]/\epsilon^2$, respectively, for various
choices of the parameters $\lambda$, $\mu$ and $\xi$. According to (\ref{eq:param}), (\ref{eq:limiti})
and (\ref{eq:parametri}), the parameters are $\nu=N\,\epsilon^2$, $\alpha=\lambda+\mu$,
$\gamma=(\lambda-\mu)/\epsilon$,   $\beta=\gamma\nu/\alpha$ and $y=j\epsilon$, so that from
(\ref{cat_dis_M1}) and (\ref{mean_X}) one has $E_j[M(t)]=E_y[X(t)]/\epsilon$. Furthermore, from the plots (b) and (d) of
Figure \ref{figure7} we note that the goodness of the approximation for the variances improves when $\lambda$ and $\mu$ are close.
\par
Similarly as (\ref{defTy}), let us denote by $T_{y}$ the first-passage time of $X(t)$ through 0, with $X(0)=y$, 
and let $g(0,t\,|\,y)$ be the corresponding density. The densities $\widetilde g(0,t\,|\,y)$ and $g(0,t\,|\,y)$, 
for $t\geq 0$ are related as follows (see, for instance,  \cite{GiNo2012}):
\begin{equation}
 g(0,t\,|\,y)=e^{-\xi\,t}\widetilde g(0,t\,|\,y)+ \xi\,e^{-\xi\,t}
 \biggl[1-\int_0^t \widetilde  g(0,\tau\,|\,y) d\tau\biggr],
 \qquad y\neq 0.
\label{g_gtilde}
\end{equation}
\par
Considering the Laplace transforms in (\ref{g_gtilde}) one has 
\begin{equation}
 g_s(0\,|\,y)={s\over s+\xi}\,\tilde g_{s+\xi}(0\,|\,y)+{\xi\over s+\xi}, \qquad s>0,
 \label{LT_g}
\end{equation}
with $\tilde g_{s+\xi}(0\,|\,y)$ given in (\ref{laplace_g_tilde}). From (\ref{LT_g}) it follows that 
$ P(T_{y}<+\infty)=1$, so that the first passage through 0 occurs almost surely. The moments of 
$T_{y}$ can thus be evaluated by means of (\ref{LT_g}) making use of  (\ref{laplace_g_tilde}).
In particular, for $y\neq 0$ we have:  
\begin{equation}
E[T_y]={1\over\xi}\,\bigl[1-\tilde g_{\xi}(0|y)\bigr]=
{1\over \xi}\,\left[1-\exp\biggl\{{y\over 2\nu}\bigl(y-2\beta \bigr)\biggr\}\;{\displaystyle 
 {D_{-\xi/\alpha}\biggl({\rm sgn}(y)(y-\beta)\,{\sqrt{2\over\nu}}\biggr)}\over 
{\displaystyle D_{-\xi/\alpha}\biggl(-{\rm sgn}(y)\beta\,{\sqrt{2\over\nu}}\,\biggr)}}\right],\label{eq:ETy}
\end{equation}
and the second order moment can be obtained from 
$$
 E[T_y^2]={2\over\xi^2}\,\Bigl[1-\tilde g_{\xi}(0|x_0)+\xi\,{d\over d\xi}\,\tilde g_{\xi}(0|x_0)\Bigr].
$$
\subsection{A special case}
We now assume that $\beta=0$. Recalling Eqs.\ (\ref{eq:param}) and (\ref{eq:parametri}), this 
assumption corresponds to the condition $\lambda=\mu$. In other terms, in this case the 
process $M(t)$ is symmetric (see Eq.\ (\ref{eq:symmpjn})), and also the jump-diffusion 
process $X(t)$ reflects such a symmetry. From (\ref{eq:fandfp}),  one has  (cf. \cite{GiNo2012}):
\begin{eqnarray}
&&\hspace*{-1.5cm}f(x,t|y)=e^{-\xi t}\widetilde f(x,t|y)+{2^{\xi/(2\alpha)}\over \sqrt{\pi\nu}}\Gamma\Bigl(1+{\xi\over2\alpha}\Bigr)\exp\Bigl\{-{x^2\over 2\nu}\Bigr\}D_{-\xi/\alpha}\Bigl(|x|\sqrt{2\over\nu}\Bigr)\nonumber\\
&&\hspace*{0.5cm}-{\xi\over 2\alpha\sqrt{\pi\nu}}\sum_{k=0}^{+\infty}(-1)^k{{\xi\over 2\alpha}-1\choose k}\Biggl[ \exp\Bigl\{-{x^2\over \nu}\Bigr\}\Psi\Bigl(1,{1\over 2}-k;{x^2\over \nu}\Bigl)\nonumber\\
&&\hspace*{0.5cm}-\bigl(1-e^{-2\alpha t}\bigr)^{k+1/2}\exp\Bigl\{-{x^2\over \nu\bigr(1-e^{-2\alpha t}\bigr)}\Bigr\}\Psi\Bigl(1,{1\over 2}-k;{x^2\over \nu\bigr(1-e^{-2\alpha t}\bigr)}\Bigl)\Biggr],
\label{denscat_betazero}
\end{eqnarray}
where $\Psi(a,b;x)$ denotes the Kummer's function of the second kind,  defined as (cf.~\cite{GrRy2007}, p.~1023, n.~9.210.2):
$$
\hspace*{-0.2cm}\Psi(a,b;x)={\Gamma(1-b)\over \Gamma(a-b+1)}\;\Phi(a,b;x)
+{\Gamma(b-1)\over \Gamma(a)}\;x^{1-b}\;\Phi(a-b+1,2-b;x).
$$
\begin{figure}[t]
\centering
\subfigure[$\lambda=0.6, \mu=0.6,\xi=0$]{\includegraphics[width=0.45\textwidth]{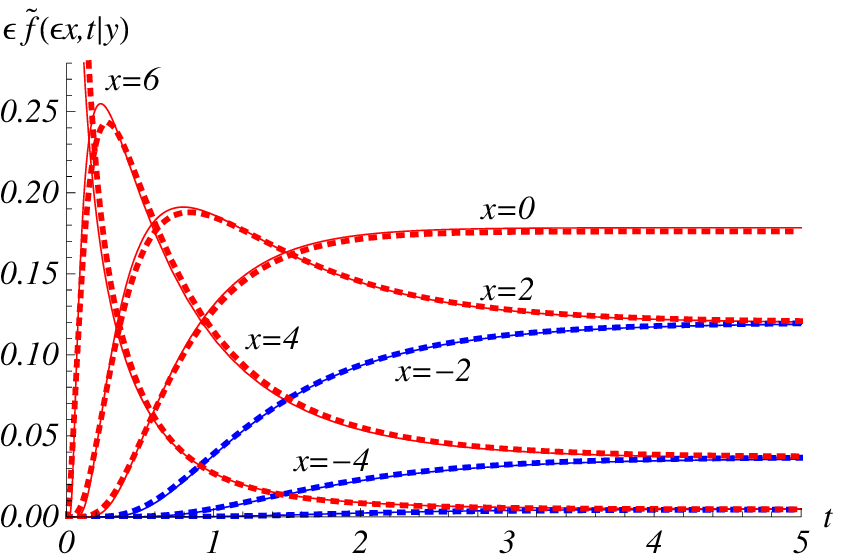}}\qquad
\subfigure[$\lambda=0.6, \mu=0.6, \xi=0.5$]{\includegraphics[width=0.45\textwidth]{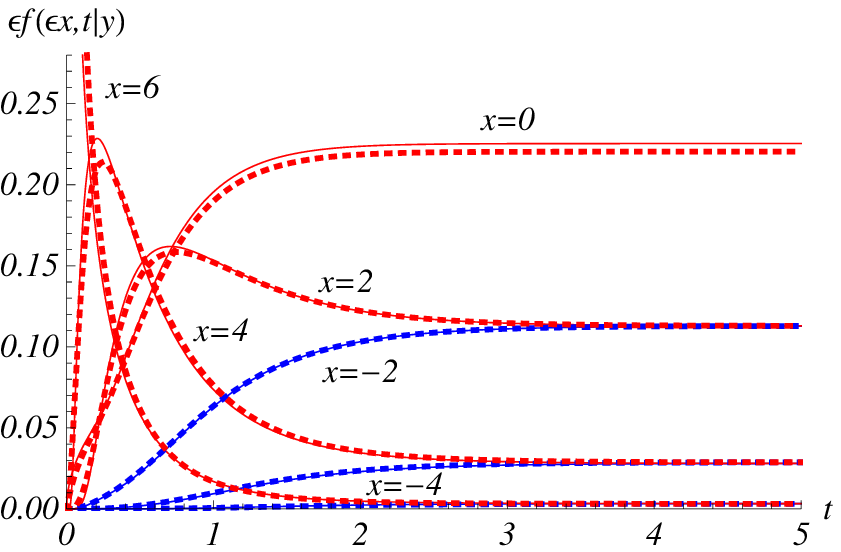}}\\
\caption{\footnotesize The transition probabilities $\widetilde p_{j,n}(t)$ and $p_{j,n}(t)$ (dotted curves) for $j=6$ and $N=10$ 
are compared with $\epsilon \widetilde f(\epsilon x,t|y)$ and $\epsilon f(\epsilon x,t|y)$ (solid curves), respectively, with $\epsilon=0.01$ and $y=j\epsilon$.}
\label{figure8}
\end{figure}
For $\lambda=\mu=0.6$, in  Figure~\ref{figure8}   we compare the  transition probability $\widetilde p_{j,n}(t)$ with $\epsilon \widetilde f(\epsilon x,t|y)$ 
for $\xi=0$ in (a) and  the  transition probability $p_{j,n}(t)$ with $\epsilon f(\epsilon x,t|y)$ for $\xi=0.5$ in (b). 
According to (\ref{eq:param}), (\ref{eq:limiti}) and (\ref{eq:parametri}), the parameters are $\nu=N\,\epsilon^2$, 
$\alpha=2\mu$, $\gamma=\beta=0$ and $y=j\epsilon$. 
\par
Finally, in the special case $\beta=0$, making use of (\ref{gtilde_SC}) in (\ref{g_gtilde}), 
for the process $X(t)$ we obtain 
\begin{equation}
 g(0,t\,|\,y)=e^{-\xi t}\,\widetilde g(0,t\,|\,y)+\xi\,e^{-\xi t}{\rm Erf}\biggl(|y| e^{-\alpha t}\,
 \sqrt{1\over \nu(1-e^{-2\alpha t})}\biggr),
 \qquad t\geq 0\quad y\neq 0,
 \label{g_SC}
\end{equation}
where ${\rm Erf}(\cdot)$ is the error function, and $\widetilde g(0,t\,|\,y)$ is given in 
(\ref{gtilde_SC}). 
%
\begin{figure}[t]
\centering
\subfigure[$\lambda=0.6, \mu=0.6, j=3$]{\includegraphics[width=0.45\textwidth]{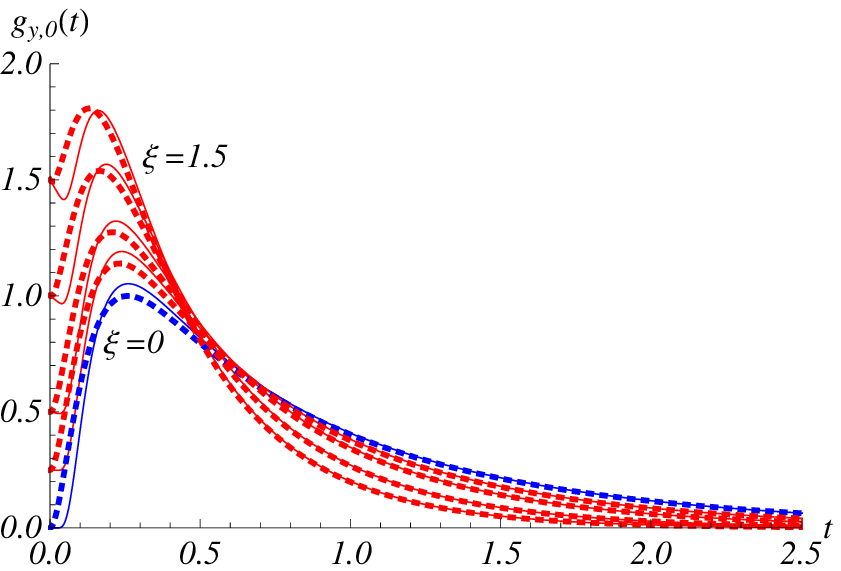}}\qquad
\subfigure[$\lambda=0.6, \mu=0.6, j=6$]{\includegraphics[width=0.45\textwidth]{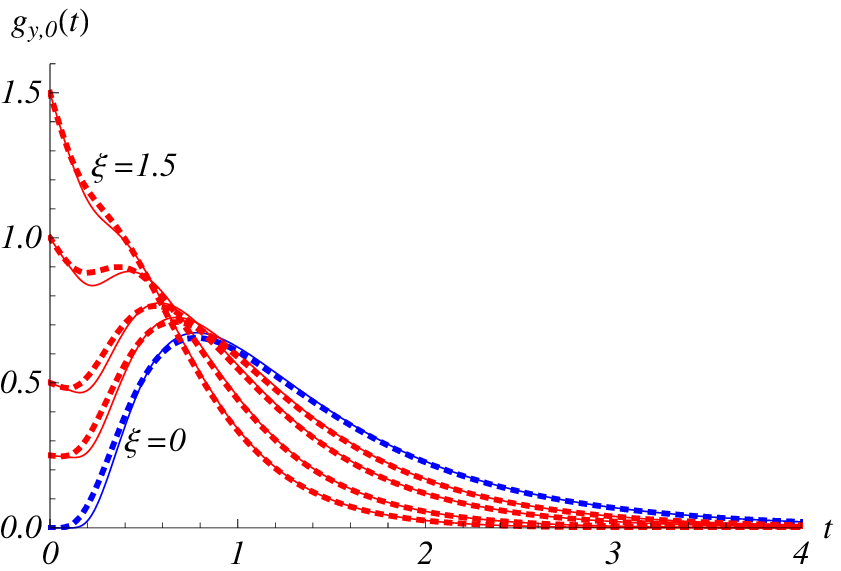}}\\
\caption{\footnotesize The first passage time density  $g_{j,0}(t)$ (dotted curves) for $N=10$ and $\xi=0, 0.25, 0.5, 1.0, 1.5$ (bottom up 
near the origin) are compared with $g(0,t|y)$ (solid curves) with $\epsilon=0.01$ and $y=j\epsilon$.}
\label{figure9}
\end{figure}
\par
In Figure~\ref{figure9} we compare the first passage time density  $g_{j,0}(t)$ with $g(0,t|y)$ for $\lambda=\mu$ and  various
choices of  $\xi$. According to (\ref{eq:param}), (\ref{eq:limiti}) and (\ref{eq:parametri}), 
the parameters are $\nu=N\,\epsilon^2$, $\alpha=2\mu$, $\gamma=\beta=0$ and $y=j\epsilon$. 
We note that the goodness of the approximation  improves when $t$ increases.
\par
Moreover, due to (\ref{laplace_g_tilde_SC}) and (\ref{eq:ETy}), and 
recalling (\ref{eq:D0}), for $\beta=0$ one has the mean first passage time:
\begin{equation}
E[T_y]=
{1\over \xi}\,\left[1-{2^{\xi/(2\alpha)}\over \sqrt\pi}\,\Gamma\Bigl({1\over 2}
+{\xi\over 2\alpha}\Bigr)\,\exp\biggl\{{y^2\over 2\nu}\biggr\}\;
D_{-\xi/a}\biggl(\sqrt{2\over\nu}|y|\biggr)\right], \qquad y\neq 0.
\label{eq:ETy1}
\end{equation}
The goodness of the continuous approximation is also confirmed by the results obtained for the 
first passage times. Indeed, Figure \ref{figure10} shows some plots of the mean and variance 
of the first-passage-time of ${\cal T}_y$ and $T_y$, for some choices of the parameters, with 
$0\leq \xi\leq 5$, and for parameters  $\nu=N\,\epsilon^2$, $\alpha=2\mu$, $\gamma=\beta=0$ 
and $y=j\epsilon$. We remark that $E[{\cal T}_j]$,  $V[{\cal T}_j]$ and  $V[T_y]$ have been 
obtained by means of numerical calculations, whereas  $E[T_y]$ is obtained from (\ref{eq:ETy1}). 
It is clear that the means and variances are decreasing when $\xi$ increases. This fact confirms 
that the presence of the catastrophes has a regulatory effect on the considered stochastic system. 
%
\begin{figure}[t]
\centering
\includegraphics[width=0.45\textwidth]{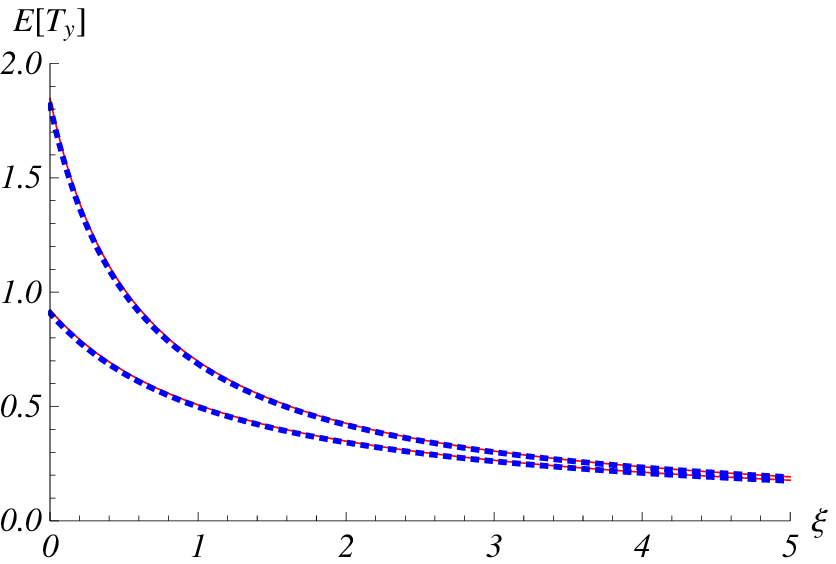}\qquad
\includegraphics[width=0.45\textwidth]{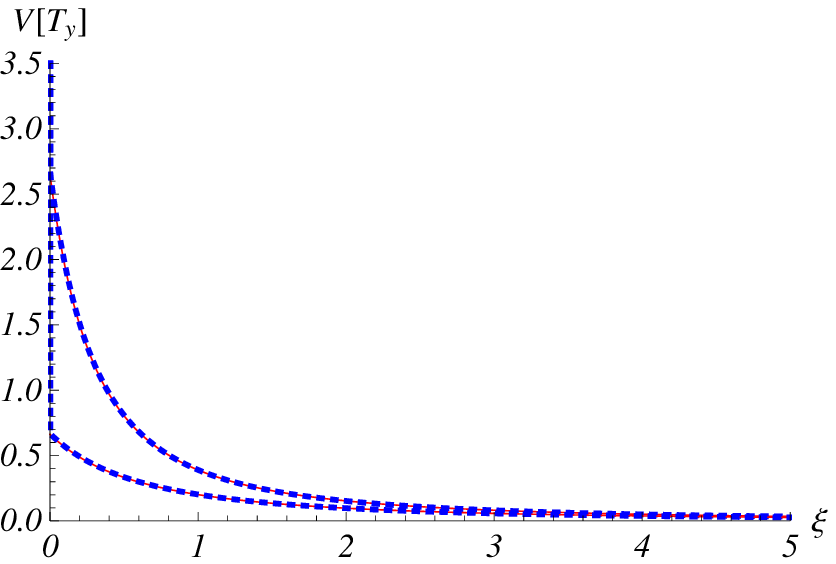}\\
\caption{\footnotesize The mean $E[{\cal T}_j]$ and the variance $V[{\cal T}_j]$ (dotted curves) 
for $j=3$, $N=10$ and $\lambda=\mu=0.3, 0.6$ (top to bottom)
are compared with $E[T_y]$ and  $V[T_y]$ (solid curves), respectively, with $\epsilon=0.01$ and $y=j\epsilon$.}
\label{figure10}
\end{figure}
%
\section*{Acknowledgements}
One of the authors (S.D.) thanks the National Board for Higher Mathematics, India, for the financial 
assistance during the preparation of this paper. The research of the remaining authors 
(A.D.C., V.G., A.G.N.) is partially supported by GNCS-INdAM and Regione Campania (legge 5). 

%
\end{document}